\documentclass[12pt,a4paper]{article}

\usepackage{authblk}
\usepackage[margin=3cm]{geometry}
\usepackage{t1enc}
\usepackage[utf8]{inputenc}
\usepackage{amsthm,amsmath,amssymb}
\usepackage{graphicx}
\usepackage{enumerate}
\usepackage{hyperref}
\usepackage{bm}
\usepackage{comment}
\usepackage{amsfonts}
\usepackage{graphicx,caption}
\usepackage{bm}
\usepackage{amsmath, amsthm, amssymb}
\usepackage{graphicx}
\usepackage{hyperref}
\usepackage{relsize}
\usepackage{blkarray}
\usepackage{algpseudocode}

\usepackage{bbm}

\theoremstyle{plain}
\usepackage{amsthm}
\makeatletter
\newcommand{\newreptheorem}[2]{\newtheorem*{rep@#1}{\rep@title}\newenvironment{rep#1}[1]{\def\rep@title{#2 \ref*{##1}}\begin{rep@#1}}{\end{rep@#1}}}
\makeatother

\newtheorem{theorem}{Theorem}
\newtheorem*{theorem-non}{Theorem}
\newtheorem*{non-lemma}{Lemma}
\newtheorem{lemma}[theorem]{Lemma}
\newreptheorem{lemma}{Lemma}

\newtheorem{conjecture}[theorem]{Conjecture}
\theoremstyle{definition}


\DeclareMathOperator{\cok}{cok}

\DeclareMathOperator{\supp}{supp}

\DeclareMathOperator{\Aut}{Aut}

\DeclareMathOperator{\Disc}{Disc\,}

\DeclareMathOperator{\syst}{syst}

\begin{document}
\title{The $2$-torsion of determinantal hypertrees is not Cohen-Lenstra}
\author{Andr\'as M\'esz\'aros}
\date{}
\affil{University of Toronto}
\maketitle
\begin{abstract}
Let $T_n$ be a $2$-dimensional determinantal hypertree on $n$ vertices. Kahle and Newman conjectured that the $p$-torsion of $H_1(T_n,\mathbb{Z})$ asymptotically follows the Cohen-Lenstra distribution. For $p=2$, we disprove this conjecture by showing that given a positive integer $h$, for all large enough $n$, we have
\[\mathbb{P}(\dim H_1(T_n,\mathbb{F}_2)\ge h)\ge \frac{e^{-200h}}{(100h)^{5h}}.\]
We also show that $T_n$ is a bad cosystolic expander with positive probability. 
\end{abstract}

\section{Introduction}

\emph{Determinantal hypertrees} are natural higher dimensional generalizations of a uniform random spanning tree of a complete graph. They can be defined in any dimension, but in this paper, we restrict our attention to the $2$-dimensional case. A $2$-dimensional simplicial complex $S$ on the vertex set $[n]=\{1,2,\dots,n\}$ is called a ($2$-dimensional) hypertree, if
\begin{enumerate}[\hspace{30pt}(a)]
 \item\label{pra} $S$ has complete $1$-skeleton;
 \item\label{prb} The number of triangular faces of $S$ is ${n-1}\choose{2}$;
 \item\label{prc} The homology group $H_{1}(S,\mathbb{Z})$ is finite.
\end{enumerate}

Let $\mathcal{C}(n,2)$ be the set of hypertrees on the vertex set $[n]$. Kalai's generalization of Cayley's formula \cite{kalai1983enumeration} states that
\[\sum_{S\in \mathcal{C}(n,2)} |H_{1}(S,\mathbb{Z})|^2=n^{{n-2}\choose {2}}.\]

This formula suggests that the natural probability measure on the set of hypertrees is the one where the probability assigned to a hypertree $S$ is \begin{equation}\label{measuredef}
 \frac{|H_{1}(S,\mathbb{Z})|^2}{n^{{n-2}\choose {2}}}.
\end{equation}
It turns out that this measure is a determinantal probability measure \cite{lyons2003determinantal,hough2006determinantal}. Thus, a random hypertree $T_n$ distributed according to \eqref{measuredef} is called a determinantal hypertree. General random determinantal complexes were investigated by Lyons \cite{lyons2009random}. See \cite{kahle2022topology,meszaros2022local,werf2022determinantal,linial2019enumeration,meszaros2023coboundary,meszaros2024bounds} for some recent results on determinantal hypertrees.

Combining Kalai's formula with the trivial fact that \[|\mathcal{C}(n,2)|\le {{{n}\choose{3}}\choose{{n-1}\choose{2}}},\]
one can prove that $\mathbb{E}|H_1(T_n,\mathbb{Z})|=\exp(\Theta(n^2))$, see \cite{kahle2022topology}. Although the expected size of $H_1(T_n,\mathbb{Z})$ is large, the $p$-torsion of $H_1(T_n,\mathbb{Z})$ is believed to be of constant order. Kahle and Newman \cite{kahle2022topology} even had a candidate for the limiting distribution of the $p$-torsion.

\begin{conjecture}(Kahle and Newman \cite{kahle2022topology})\label{conj1}
Let $p$ be a prime. The $p$-torsion $\Gamma_{n,p}$ of $H_1(T_n,\mathbb{Z})$ converges to the \emph{Cohen-Lenstra distribution}. That is, for any finite abelian $p$-group $G$, we have
\[\lim_{n\to\infty} \mathbb{P}(\Gamma_{n,p}\cong G)=\frac{1}{|\Aut(G)|}\prod_{j=1}^{\infty}\left(1-p^{-j}\right).\]
\end{conjecture}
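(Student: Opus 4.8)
The plan is to prove the conjecture by the method of moments, exploiting that the Cohen--Lenstra distribution on finite abelian $p$-groups is characterized, and robustly determined, by its surjection moments. If $G$ is Cohen--Lenstra distributed, then $\mathbb{E}\,\#\Sur(G,A)=1$ for every finite abelian $p$-group $A$, and by the moment-determinacy theorems of Wood and of Sawin--Wood these (slowly growing, all equal to $1$) moments pin the distribution down uniquely. Thus it suffices to establish that for each fixed finite abelian $p$-group $A$,
\[
\mathbb{E}\,\#\Sur(\Gamma_{n,p},A)\longrightarrow 1\qquad(n\to\infty).
\]
Since $H_1(T_n,\mathbb{Z})$ is finite, every homomorphism from it to a $p$-group factors through its $p$-torsion, so $\#\Sur(\Gamma_{n,p},A)=\#\Sur(H_1(T_n,\mathbb{Z}),A)$; by inclusion--exclusion over the subgroups of $A$ it is then enough to compute the homomorphism moments $\mathbb{E}\,\#\Hom(H_1(T_n,\mathbb{Z}),A)$ for all $A$ and Möbius-invert.

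The next step is to translate the homomorphism count into cohomology. As $T_n$ has complete, hence connected, $1$-skeleton, $H_0(T_n,\mathbb{Z})=\mathbb{Z}$ is free, so the universal coefficient theorem gives $\#\Hom(H_1(T_n,\mathbb{Z}),A)=\#H^1(T_n,A)=|Z^1(T_n,A)|/|A|^{n-1}$, where $Z^1(T_n,A)$ is the group of $A$-valued simplicial $1$-cocycles and the denominator counts the $|A|^{n-1}$ coboundaries. A labeling $f\colon E\to A$ of the complete edge set $E$ lies in $Z^1(T_n,A)$ precisely when every triangle of $T_n$ is $f$-balanced, i.e.\ when $T_n$ avoids the family $U_f$ of triangles on which the cocycle relation fails. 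Averaging over the determinantal law,
\[
\mathbb{E}\,|Z^1(T_n,A)|=\sum_{f\colon E\to A}\mathbb{P}\big(T_n\cap U_f=\emptyset\big),
\]
and each avoidance probability is a principal minor $\det(I-K|_{U_f})$ of the kernel $K$ defining the measure, which I would evaluate either directly or, more robustly, through the local weak limit of $T_n$.

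The heuristic underlying the conjecture is that this sum is dominated by the coboundaries $f\in B^1(T_n,A)$: for those $U_f=\emptyset$, the avoidance probability equals $1$, and there are exactly $|A|^{n-1}$ of them, producing the target value $1$ after dividing. The entire content of the proof would therefore reduce to showing that the globally non-trivial labelings contribute a vanishing amount. I would organize the remaining $f$ by the support and local structure of $U_f$, bound their total determinantal weight, and expect the leading corrections to come from labelings that are balanced on all but a few short cycles visible in the local limit.

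This last estimate is the main obstacle, and it is exactly the point at which the plan is forced to break down. A careful accounting of the locally-balanced-but-globally-non-trivial labelings produces cocycles supported on small configurations that, at least for $p=2$, survive with positive asymptotic probability; the corresponding contributions push the $2$-adic moments above $1$ and generate precisely the heavy-tailed lower bound on $\dim H_1(T_n,\mathbb{F}_2)$ announced in the abstract. Consequently the moment method yields convergence to the Cohen--Lenstra distribution only for those $p$ and $A$ where these small-support cocycles can be shown to be negligible, and the genuinely hard part of any attempt at the full conjecture is the fine analysis of such configurations in the local weak limit of $T_n$.
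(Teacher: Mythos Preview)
The statement you were asked to prove is a \emph{conjecture}, and the paper does not prove it---on the contrary, the paper \emph{disproves} it for $p=2$ (this is Theorem~1). There is therefore no ``paper's own proof'' to compare against, and any complete proof of the conjecture as stated would be incorrect.

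Your write-up is not really a proof but a sketch of the moment-method heuristic that motivates the conjecture, followed by an honest admission that the crucial estimate fails. The failure mode you isolate in your last paragraph---cocycles supported on small configurations that survive with positive asymptotic probability for $p=2$---is precisely what the paper establishes rigorously. Concretely, the paper takes the labelings whose support is a disjoint union of $5$-cycles (the family $\mathcal{G}_{n,h}$), shows each such $G$ lies in $Z^1(T_n,\mathbb{F}_2)$ with probability of order $n^{-5h}$, and runs a first/second moment (Paley--Zygmund) argument to conclude that at least one survives with probability bounded below independently of $n$. This is exactly the ``leading correction from labelings balanced on all but a few short cycles'' that you anticipated would be the obstruction; the paper makes it quantitative and shows it does not vanish.

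So the genuine gap, if one insists on reading your proposal as a proof attempt, is that the step ``show the globally non-trivial labelings contribute a vanishing amount'' is never carried out---and cannot be, because for $p=2$ it is false. Your closing paragraph already concedes this. For $p>2$ the conjecture remains open, and your outline is roughly the shape any proof would have to take; but the paper contains no such argument either, and the discussion in Section~1.1 explains why the moment sum \eqref{momentsum} resists the usual symmetry-based reductions.
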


A similar conjecture was considered in \cite{kahle2020cohen} for the uniform measure on the set of hypertrees.

Conjecture~\ref{conj1} would imply that
\begin{equation}\label{rankconj}
 \lim_{n\to\infty} \mathbb{P}(\dim H_1(T_n,\mathbb{F}_p)=k)=p^{-k^2} \prod_{j=1}^{k} \left(1-p^{-j}\right)^{-2} \prod_{j=1}^{\infty}\left(1-p^{-j}\right),
\end{equation}
see \cite[Theorem 6.3]{cohen2006heuristics}.

See Section~\ref{sechistory} for more information on the Cohen-Lenstra heuristics. 

For a simplified model motivated by determinantal hypertrees and $p\ge 5$, the Cohen-Lenstra limiting distribution was established by the author \cite{meszaros2023cohen}.

\begin{theorem}\label{thm1}
Given a positive integer $h$, for all large enough $n$, we have
\[\mathbb{P}(\dim H_1(T_n,\mathbb{F}_2)\ge h)\ge \frac{e^{-200h}}{(100h)^{5h}}.\]
In particular, \eqref{rankconj} can not hold for all $k$ in the case $p=2$. Thus, Conjecture~\ref{conj1} is false.
\end{theorem}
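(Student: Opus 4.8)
The plan is to find a small "gadget" — a sub-configuration of triangles that can sit inside $T_n$ — which forces $H_1(\cdot,\mathbb{F}_2)$ to be large, and then show that $T_n$ contains many disjoint copies of this gadget with non-negligible probability. The natural candidate is a configuration of triangles whose $\mathbb{F}_2$-boundary map has a nontrivial kernel modulo $2$ but not over $\mathbb{Z}$ or over $\mathbb{F}_p$ for odd $p$; this is exactly the kind of "$2$-torsion witness" that explains why $p=2$ is special. Concretely, I would look for a small set of triangles $F$ on a bounded vertex set such that, inside the full boundary complex, the $\mathbb{F}_2$-cycle space picks up an extra dimension — e.g. a configuration homeomorphic to $\mathbb{RP}^2$ (the minimal triangulation has $6$ vertices and $10$ triangles), whose first homology is $\mathbb{Z}/2$. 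Each embedded $\mathbb{RP}^2$-like gadget contributes (roughly) one to $\dim H_1(T_n,\mathbb{F}_2)$, provided the gadgets are vertex-disjoint so their contributions are independent.

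**The key steps, in order, would be:** (1) Fix the gadget: a simplicial complex $K$ on $O(1)$ vertices with $H_1(K,\mathbb{F}_2)\ne 0$ whose triangle set is "compatible" with being part of a hypertree, i.e. can be completed to a full hypertree on $[n]$; verify that if $T_n$ restricted to a vertex subset $V$ of size $|V(K)|$ contains all the triangles of $K$ (and the relevant adjacent structure), then $\dim H_1(T_n,\mathbb{F}_2)$ exceeds what it would otherwise be. (2) Use the determinantal structure of $T_n$: the probability that a fixed set of triangles all lie in $T_n$ is a determinant of a submatrix of the relevant projection (the transfer current matrix for the simplicial spanning-tree measure), and this is bounded below by a constant depending only on $|V(K)|$ — here I would invoke the known local-weak-convergence / determinantal machinery for $T_n$ referenced in the excerpt. (3) Run a second-moment or disjointness argument: partition (most of) $[n]$ into $\Theta(n)$ blocks of size $|V(K)|$, let $X_i$ be the indicator that block $i$ carries a copy of the gadget, show $\mathbb{E}X_i\ge c>0$ and that the $X_i$ are approximately independent (or at least have controlled correlations, using negative association of determinantal measures), and conclude that with probability bounded below, at least $h$ of the blocks carry gadgets. (4) Additivity: disjoint gadgets on disjoint vertex sets contribute independently to $H_1(\cdot,\mathbb{F}_2)$ via a Mayer–Vietoris / direct-sum argument, so $h$ gadgets give $\dim H_1(T_n,\mathbb{F}_2)\ge h$. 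Tracking the constants through steps (2)–(3) should produce the explicit bound $e^{-200h}/(100h)^{5h}$: the $(100h)^{5h}$-type factor comes from needing a specific local configuration on $O(h)$ vertices simultaneously, and $e^{-200h}$ absorbs the determinantal probability cost.

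**The main obstacle** will be step (1) combined with the additivity in step (4): it is not enough for $T_n$ to merely contain the triangles of $K$, because $T_n$ contains $\binom{n-1}{2}$ triangles total and the homology is a global quantity — I must show that the local $\mathbb{F}_2$-cycle supported on the gadget is not killed by the rest of the complex, i.e. that it remains nonzero in $H_1(T_n,\mathbb{F}_2)$ rather than becoming a boundary. This requires either choosing $K$ so that its fundamental $\mathbb{F}_2$-class is "detected" by a cocycle that extends to all of $T_n$ (a cosystolic-expansion-style obstruction — note the abstract's remark about bad cosystolic expansion, which suggests exactly such cochains are constructed), or arguing that conditioning on the gadget, the rest of $T_n$ behaves like a determinantal hypertree on the quotient and generically does not interact. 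Handling the conditioning — showing $T_n$ given a local pattern is still "spread out" enough that the gadget survives and that distinct gadgets don't cancel each other — is the delicate part; the determinantal/negative-association properties of the measure are the tool I would lean on, together with a careful choice of $K$ whose $2$-torsion class is robust.
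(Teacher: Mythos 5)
Your overall architecture (local witness, first and second moment, Paley--Zygmund, disjoint witnesses adding up) matches the paper, but your choice of witness fails at the very first step, and the failure is quantitative. You propose requiring $T_n$ to \emph{contain} a fixed small subcomplex $K$ with $H_1(K,\mathbb{F}_2)\neq 0$ (e.g.\ the $6$-vertex, $10$-triangle $\mathbb{RP}^2$), and you assert in step (2) that the probability of such a fixed local configuration is bounded below by a constant. It is not: by the determinantal (negative association) bound $\mathbb{P}(F\subset T_n)\le(3/n)^{|F|}$, a fixed placement of $K$ occurs with probability $O(n^{-f})$ where $f$ is the number of triangles of $K$, so the expected number of copies over all $O(n^{v})$ placements is $O(n^{v-f})$. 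Any $2$-complex supporting a nonzero mod-$2$ $2$-cycle has every edge in at least two triangles, and a closed surface has $f=2v-2\chi>v$; for the minimal $\mathbb{RP}^2$ this gives $O(n^{6-10})\to 0$. So with high probability $T_n$ contains \emph{no} such gadget, and the whole scheme collapses before the second moment is even needed. (Relatedly, your step (3) picture of $\Theta(n)$ blocks each carrying a gadget with constant probability would yield $\dim H_1(T_n,\mathbb{F}_2)=\Theta(n)$ w.h.p., far stronger than the theorem and incompatible with the expected tightness of the torsion.)

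The missing idea is to use a \emph{cocycle} witness rather than a subcomplex witness. The paper takes $G$ to be a union of $h$ vertex-disjoint $5$-cycles in the (complete) $1$-skeleton and considers the event $G\in Z^1(T_n,\mathbb{F}_2)$, i.e.\ every triangle of $T_n$ meets $E(G)$ in an even number of edges. This has two decisive advantages. First, your ``main obstacle'' disappears: $B^1(T_n,\mathbb{F}_2)$ depends only on the $1$-skeleton, which is complete, so a cocycle containing an odd cycle is automatically not a coboundary, and $h$ disjoint odd cycles give $\dim H^1\ge h$ with no interference from the rest of the complex (Lemma~\ref{largedim}); no Mayer--Vietoris or ``robustness under conditioning'' is needed. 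Second, the event forces only $5h$ triangles to be present (the consecutive triples $v_{i,j}v_{i,j+1}v_{i,j+2}$, i.e.\ $5$ triangles per $5$ vertices, exactly at the critical density $f=v$) together with the \emph{absence} of the $\Theta(hn)$ triangles meeting $G$ in one edge; Lemma~\ref{lemma8}, Cauchy--Binet, and an eigenvalue computation then give $\mathbb{P}(G\in Z^1(T_n,\mathbb{F}_2))\ge 2^{2h}n^{-5h}e^{-80h}$, which balances the $\asymp n^{5h}/(10^h h!)$ placements to produce a first moment bounded below. Your homological gadget could in principle be salvaged by passing to $H_2$ and the Euler characteristic (for a hypertree $\dim H_1(\cdot,\mathbb{F}_2)=\dim H_2(\cdot,\mathbb{F}_2)$), but the density obstruction above shows no such gadget occurs in $T_n$ anyway, so the cohomological route is not merely a convenience but essential.
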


Conjecture~\ref{conj1} remains open for $p>2$. Note that in the theory of cokernels of random matrices, there are a few examples where the case $p=2$ is different from the case $p>2$, see \cite{clancy2015cohen,meszaros2020distribution,meszaros2023cohen}. Thus, it is reasonable to think that Conjecture~\ref{conj1} might be true for $p>2$. We believe the following weaker conjecture should be true even for $p=2$.

\begin{conjecture}
Let $p$ be a prime. Then $\Gamma_{n,p}$ is tight, that is, given any $\varepsilon>0$, there is a $K$, such that
\[\mathbb{P}(|\Gamma_{n,p}|>K)<\varepsilon\text{ for all large enough }n.\]
\end{conjecture}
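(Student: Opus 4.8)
The plan is to control the cokernel of the boundary map $\partial_2$ directly through the determinantal structure of $T_n$, but with a crucial caveat: the size $|\Gamma_{n,p}|$ itself has infinite limiting expectation. Indeed, the Cohen--Lenstra moment identity $\mathbb{E}[\#\mathrm{Sur}(G,\mathbb{Z}/p^k)]=1$ for every $k$ already forces $\mathbb{E}[|G|]=\infty$ for a Cohen--Lenstra distributed $G$, so a naive first--moment bound on the size is hopeless even in the well-behaved regime. Instead I would factor the problem: since a finite abelian $p$-group embeds in $(\mathbb{Z}/p^e)^r$ where $r$ is its rank and $p^e$ its exponent, we have $|\Gamma_{n,p}|\le \bigl(\exp\Gamma_{n,p}\bigr)^{\dim H_1(T_n,\mathbb{F}_p)}$, so tightness of the size follows from (i) tightness of the $\mathbb{F}_p$--rank $\dim H_1(T_n,\mathbb{F}_p)$ and (ii) tightness of the exponent $\exp\Gamma_{n,p}$. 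The first is accessible by a first--moment computation; the second is the genuine obstacle.

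For (i), write $\mathbb{E}[p^{\dim H_1(T_n,\mathbb{F}_p)}]=\mathbb{E}[|H^1(T_n,\mathbb{F}_p)|]$ and expand cohomology as cocycles modulo coboundaries. Letting $\delta$ denote the simplicial coboundary and, for a $1$-cochain $\phi\in\mathbb{F}_p^{E}$, letting $S_\phi=\{t:\ (\delta\phi)(t)\neq 0\}$ be its defect set of triangles, a cochain descends to a cocycle of $T_n$ precisely when the chosen triangle set avoids $S_\phi$. Using the determinantal avoidance formula with the transfer--current projection $K$ (the orthogonal projection onto the row space of $\partial_2$, which is the correlation kernel of $T_n$), one obtains
\[
\mathbb{E}[|H^1(T_n,\mathbb{F}_p)|]=p^{-(n-1)}\sum_{\phi\in\mathbb{F}_p^{E}}\det\bigl((I-K)_{S_\phi}\bigr).
\]
The $p^{\,n-1}$ coboundaries have $S_\phi=\varnothing$ and contribute exactly $1$; the remaining cochains form an excess that should match the contribution of small $\mathbb{F}_p$--$2$-cycles (recall $\dim H_1(T_n,\mathbb{F}_p)=\dim H_2(T_n,\mathbb{F}_p)$, the leading such cycles being tetrahedron boundaries, whose expected count is $O(1)$ because the $\binom{n}{4}$ choices meet four triangle marginals each of order $1/n$). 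The step here is to bound this excess uniformly in $n$, which a marginal estimate for $K$ should supply; Markov's inequality then yields tightness of the rank.

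Part (ii) is where I expect the real difficulty. To certify $\exp\Gamma_{n,p}\ge p^{k}$ one must produce a surjection $H_1(T_n,\mathbb{Z})\twoheadrightarrow\mathbb{Z}/p^{k}$, equivalently a $\mathbb{Z}/p^k$-cocycle not reducing to a lower order one; but the Cohen--Lenstra identity makes $\mathbb{E}[\#\mathrm{Sur}(H_1,\mathbb{Z}/p^k)]$ stay of order $1$, so first moments are blind to the upper tail, the events $\{\exp\ge p^k\}$ being rare yet carrying many surjections. I would attack this either by a second--moment estimate on $\#\mathrm{Sur}(H_1,\mathbb{Z}/p^k)$, hoping the variance stays comparable to the mean so that $\mathbb{P}(\exp\ge p^k)\lesssim 1/k$ uniformly in $n$, or, more structurally, by localizing the torsion through the Benjamini--Schramm limit of $T_n$: an element of order $p^{k}$ ought to force an increasingly long local integral obstruction whose probability decays in $k$, and the negative association of determinantal measures would let one transfer such decay uniformly from the finite models to the limit. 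The main obstacle is precisely this control of higher $p$-torsion rather than of the rank: it is exactly the regime where the first--moment methods underlying the Cohen--Lenstra program break down, and closing it seems to require genuinely new input into how the integral, not merely mod-$p$, homology of $T_n$ is assembled from local pieces. An alternative, strictly stronger route would be to show that $\Gamma_{n,p}$ converges in distribution to an honest probability measure on finite abelian $p$-groups of total mass one (even if not the Cohen--Lenstra one), from which tightness is immediate; but this subsumes the conjecture and appears at least as hard.
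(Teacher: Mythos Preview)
The statement you are addressing is a \emph{conjecture} in the paper, not a theorem; the paper offers no proof of it at all (it merely points to \cite{meszaros2024bounds} for partial progress). So there is no paper proof to compare against, and the relevant question is simply whether your proposal actually proves the conjecture. It does not: as you yourself flag, part~(ii)---tightness of the exponent of $\Gamma_{n,p}$---is left as a hope, with only heuristic suggestions (a second-moment bound on $\#\Sur(H_1,\mathbb{Z}/p^k)$, or a local-limit argument) and an explicit admission that ``closing it seems to require genuinely new input.'' That is an honest assessment, but it means the proposal is a strategy sketch, not a proof.

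Moreover, part~(i) is also not established. You reduce tightness of the $\mathbb{F}_p$-rank to a uniform bound on $\mathbb{E}\,p^{\dim H_1(T_n,\mathbb{F}_p)}=\mathbb{E}|H^1(T_n,\mathbb{F}_p)|$, and then assert that the excess over $1$ ``should'' be bounded by ``a marginal estimate for $K$.'' But this is exactly the $\mathbb{F}_p$-moment discussed around~\eqref{momentsum} in the paper, and the best bound recorded there (from \cite{meszaros2024bounds}) is only that the sum $\sum_G\mathbb{P}(G\in Z^1(T_n,\mathbb{F}_p))$ is $\exp(o(n^2))$, which is far from the $O(2^{n})$ you would need for $\mathbb{E}|H^1|=O(1)$. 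The tetrahedron heuristic you mention captures only the most local contributions; the genuine difficulty, emphasized in Section~\ref{sechistory}, is that one must control the contribution of \emph{all} graphs $G$, and graphs with the same number of edges can behave very differently (as the paper's own $5$-cycle example shows). So even the ``accessible'' half of your plan is, at present, an open problem rather than a routine first-moment computation.
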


For some progress towards this conjecture, see \cite{meszaros2024bounds}.

Our proof of Theorem~\ref{thm1} is based on a second moment calculation. Note that a cochain in $C^1(T_n,\mathbb{F}_2)$ is uniquely determined by its support. Since $T_n$ has a complete $1$-skeleton, $C^1(T_n,\mathbb{F}_2)$ can be identified with the set of graphs on $[n]$.

Fix a positive integer $h$. Let $\mathcal{G}_n=\mathcal{G}_{n,h}$ be the set of graphs on the vertex set $[n]$ consisting of $h$ vertex disjoint $5$-cycles
and $n-5h$ isolated vertices.

Let us consider the random variable 
\[X_n=|\{G\in \mathcal{G}_n\,:\, G\in Z^1(T_n,\mathbb{F}_2)\}|. \]

\begin{theorem}\label{thmmoment}
For all large enough $n$, we have 
  \[\mathbb{E}X_n\ge \frac{e^{-100h}}{h!},\qquad\text{ and }\qquad\mathbb{E}X^2_n\le \frac{(100h)^{5h}}{(h!)^2}.\]
\end{theorem}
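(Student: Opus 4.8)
The plan is to compute the first and second moments of $X_n$ by translating the condition $G \in Z^1(T_n,\mathbb{F}_2)$ into a statement about the determinantal measure. A graph $G$ (viewed as a cochain in $C^1(T_n,\mathbb{F}_2)$) is a cocycle precisely when $\delta G = 0$, i.e.\ when for every triangle $\{i,j,k\}$ of $T_n$ the three edges $ij, jk, ik$ contribute an even total to $G$. Equivalently, $G \perp \partial F$ for every triangle $F$ of $T_n$. Since $T_n$ is determinantal, the probability that a fixed set of triangles all lie in $T_n$ is a determinantal minor, and more usefully, the event "$G$ is a cocycle" should be computed via the structure of the boundary map. I would first set up: $\mathbb{P}(G \in Z^1(T_n,\mathbb{F}_2))$ equals the probability that $G$, as a functional on $1$-chains, vanishes on the image of $\partial_2$ restricted to $T_n$. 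Using Kalai's formula / the determinantal structure (as in \cite{kahle2022topology,meszaros2023coboundary}), one gets a clean formula: for a single $5$-cycle $C$, the probability that $C$ is a cocycle of $T_n$ is asymptotically a constant $c > 0$ (one should identify $c$ explicitly, plausibly related to $e^{-\Theta(1)}$ per $5$-cycle), and for $h$ vertex-disjoint $5$-cycles the events are asymptotically independent, giving probability $\sim c^h$ up to lower-order corrections.

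For the first moment, $\mathbb{E} X_n = |\mathcal{G}_n| \cdot \mathbb{P}(G_0 \in Z^1(T_n,\mathbb{F}_2))$ for a fixed representative $G_0 \in \mathcal{G}_n$. Here $|\mathcal{G}_n| = \frac{1}{h!}\binom{n}{5,5,\dots,5,n-5h} \cdot (\text{number of 5-cycles on a given 5-set})^h = \frac{n!}{(h!)(5!)^h (n-5h)!} \cdot 12^h$, which is $\Theta_h(n^{5h})$. Meanwhile the cocycle probability for a single $5$-cycle scales like $\Theta(n^{-5})$ (each of the $5$ edges being "used correctly" in the relevant triangles costs roughly a factor $n^{-1}$), so these two factors should combine to a constant depending only on $h$. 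The job is to track the constants carefully enough to get the explicit lower bound $e^{-100h}/h!$. I expect this to require the local-weak-convergence description of $T_n$ (the Poisson-type local limit, as in \cite{meszaros2022local}) or a direct determinantal computation à la transfer-current theorem to pin down the per-cycle constant and show it is at least, say, $e^{-100}/n^5$ up to negligible error.

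For the second moment, I would expand $\mathbb{E} X_n^2 = \sum_{G, G'} \mathbb{P}(G, G' \in Z^1(T_n,\mathbb{F}_2))$ and split the sum according to the intersection pattern of $G$ and $G'$ (which of their $5$-cycles coincide, share vertices, or are disjoint). The dominant contribution comes from pairs $(G,G')$ that are vertex-disjoint or share whole cycles; by negative association / determinantal domination (determinantal measures are negatively associated, so $\mathbb{P}(G, G' \text{ both cocycles}) \le \mathbb{P}(G \text{ cocycle})\,\mathbb{P}(G' \text{ cocycle})$ when the cocycle events can be written via disjoint-ish edge sets — though one must be careful, since cocycle events are not increasing events in the edges/triangles) this gives roughly $(\mathbb{E} X_n)^2$ plus a correction from overlapping configurations. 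The overlapping terms, where $G$ and $G'$ share some vertices but not full cycles, must be shown to be lower order; the number of such pairs is smaller by a power of $n$, and the probability does not blow up enough to compensate. Summing up, $\mathbb{E} X_n^2 \le \frac{(100h)^{5h}}{(h!)^2}$, where the $(100h)^{5h}$ absorbs both the $\sim 12^h$ cycle-counting constants and the per-cycle probability constants, raised to the relevant powers and inflated to a safe round bound.

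The main obstacle, I expect, is the subadditivity/negative-association step for the second moment: cocycle conditions are parity constraints, not monotone events, so the clean negative-association inequality for determinantal measures does not directly apply to "$G$ is a cocycle." I would handle this by conditioning on the triangle set of $T_n$ and using that, given the complex, being a cocycle is a linear-algebra condition; then the randomness reduces to the distribution of the $\mathbb{F}_2$-boundary matrix, and I would bound the joint probability by analyzing ranks — or, alternatively, by using the explicit determinantal formula for the probability that a prescribed collection of triangles is present (which controls $\delta$ on the relevant support) and showing the correlation between the two cocycle events decays. Getting the constants to land inside $e^{-100h}/h!$ and $(100h)^{5h}/(h!)^2$ respectively is then a matter of crude but careful bookkeeping, using $h! \le h^h$ and Stirling-type estimates, with plenty of slack built into the constants $100$ and $200$.
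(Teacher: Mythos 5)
Your proposal correctly identifies the cocycle characterization ($G\in Z^1$ iff every triangle of $T_n$ meets $E(G)$ in an even number of edges, i.e.\ Lemma~\ref{lemmaZ}) and the correct count $|\mathcal{G}_n|\sim n^{5h}/(10^h h!)$, but both moment estimates are left as plans rather than proofs, and the plans as stated have real gaps. For the first moment, asserting that the per-cycle cocycle probability is ``asymptotically a constant times $n^{-5}$'' and that the $h$ cycles are ``asymptotically independent'' is precisely what needs to be proved, with an explicit constant; neither local weak convergence nor a transfer-current heuristic is developed into an argument. The paper's route is structural: the cocycle condition forces $T_n(2)\subset F_0\cup F_2$, and since the $5h$ rows of $J^r_{n,2}$ indexed by $E(G)$ must be independent while only the $5h$ columns in $F_2$ can support them, one gets $F_2\subset T_n(2)$ exactly; the boundary matrix then block-triangularizes, each $5$-cycle block has determinant $\pm 2$, and Cauchy--Binet reduces $\mathbb{P}(G\in Z^1(T_n,\mathbb{F}_2))$ to $2^{2h}\det(M_G)/n^{\binom{n-2}{2}}$ with $M_G=J^r_{n,2}[E(\overline{G}),F_0]\,(J^r_{n,2}[E(\overline{G}),F_0])^T$, whose eigenvalues are pinned down ($n-2$ eigenvalues equal to $1$, the rest at least $n-14$) to give the explicit bound $2^{2h}n^{-5h}e^{-80h}$. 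Without some version of this (or an equally explicit computation), the lower bound $e^{-100h}/h!$ is not established.

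For the second moment, you correctly note that negative association does not apply to parity constraints, but your fallback (``conditioning on the triangle set and analyzing ranks,'' or ``showing the correlation decays'') is not an argument, and your framing that the answer should be $(\mathbb{E}X_n)^2$ plus lower-order corrections points in the wrong direction: the claimed bound $(100h)^{5h}/(h!)^2$ is far larger than $(\mathbb{E}X_n)^2$, so no correlation-decay statement is needed. The paper's argument is much cruder and entirely avoids the issue you worry about: since each cocycle event forces the corresponding $F_2$-set into $T_n(2)$, the joint event forces $F_2(G_0)\cup F_2(G_1)\subset T_n$, whose probability is at most $(3/n)^{10h-k}$ by the determinantal bound \eqref{hadamard}, where $k=|F_2(G_0)\cap F_2(G_1)|$; one then only needs the counting lemma $|\mathcal{G}_n(G_0,k)|\le \frac{n^{5h-k}(5h)^k}{10^h h!}\binom{5h}{k}$ (via the observation that $k$ shared triangles force at least $k$ shared non-isolated vertices) and a summation over $k$. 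You would need to supply both the containment step $\{G_0,G_1\in Z^1\}\subset\{F_2(G_0)\cup F_2(G_1)\subset T_n\}$ and the overlap counting to close the argument.
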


By the Paley–Zygmund inequality, for all large enough $n$, we have
\begin{equation}\label{Paley}
\mathbb{P}(X_n>0)\ge \frac{(\mathbb{E}X)^2}{\mathbb{E}X^2}\ge \frac{e^{-200h}}{(100h)^{5h}}.
\end{equation}

On the event $X_n>0$, we have $\dim H_1(T_n,\mathbb{F}_2)\ge h$, see Lemma~\ref{largedim}. Thus, we obtain Theorem~\ref{thm1}, see Section~\ref{secfinish} for details.

Kahle and Newman \cite{kahle2022topology} also asked about the expansion properties of determinantal hypertrees. We can define the expansion of a graph in several equivalent ways. Each of these definitions suggests a higher dimensional generalization. However, in higher dimensions these definitions are no longer equivalent, so in higher dimensions, we have several notions of expansion \cite{lubotzky2018high}. Vander Werf \cite{werf2022determinantal} proved that for any $\delta>0$, the fundamental group of the union of $\delta \log(n)$ independent determinantal hypertrees on $n$ vertices has property $(T)$ with high probability. The author~\cite{meszaros2023coboundary} proved that for any large enough  $k$, the union of $k$ independent determinantal hypertrees on $n$ vertices is a coboundary expander with high probability. Coboundary expansion was defined (implicitly) by Gromov~\cite{gromov2010singularities} and Linial, Meshulam~\cite{linial2006homological}. The somewhat weaker notion of \emph{cosystolic expansion} was introduced in~\cite{dotterrer2018expansion,kaufman2016isoperimetric,evra2016bounded}. It is a natural question to ask whether determinantal hypertrees are cosystolic expanders or not. It follows easily from our results that they are not.

Now we define cosystolic expanders following \cite{lubotzky2018high}. Let $K$ be a $d$-dimensional complex. We introduce a norm on $C^i(K,\mathbb{F}_2)$. We first define the weight $w(\sigma)$ of an $i$-dimensional face $\sigma$ as the number of top dimensional faces containing $\sigma$ divided by a normalizing factor, that is,
\[w(\sigma)=\frac{1}{{{d+1}\choose{i +1}}|K(d)|}|\{\tau\in K(d)\,:\,\sigma\subset \tau\}|.\]
The normalizing factor is chosen such that the total weight of the $i$-dimensional faces is $1$.

Then, for $f\in C^i(K,\mathbb{F}_2)$, we define
\[\|f\|=\sum_{\sigma\in\supp f} w(\sigma).\]

We can also extend this norm to cosets of $Z^i(K,\mathbb{F}_2)$ by setting
\[\|f+Z^i(K,\mathbb{F})\|=\min_{g\in f+Z^i(K,\mathbb{F})} \|g\|.\]

Next, for $0\le i\le d-1$, we define 
\begin{align*}\label{hidef}
  \tilde{\epsilon}_i(K)&=\min_{f\in C^i(K,\mathbb{F}_2)\setminus Z^i(K,\mathbb{F}_2)} \frac{\|\delta_i f\|}{\|f+Z^i(K,\mathbb{F}_2)\|}, \text{ and }\\
  \syst^i(K)&=\min_{f\in Z^i(K,\mathbb{F}_2)\setminus B^i(K,\mathbb{F}_2) } \|f\|.
\end{align*}

For $\varepsilon>0$, we say that $K$ is an $\varepsilon$-cosystolic expander if we have $\tilde{\epsilon}_i(K)\ge \varepsilon$ and $\syst^i(K)\ge \varepsilon$ for all $0\le i\le d-1$. The next theorem shows that with positive probability $T_n$ is a bad cosystolic expander.

\begin{theorem}\label{thmcosys}
We have
\[\liminf_{n\to\infty} \mathbb{P}\left(\syst^1(T_n)\le \frac{7}{n^2}\right)>0.\]
\end{theorem}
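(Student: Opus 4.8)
\textbf{Proof proposal for Theorem~\ref{thmcosys}.}

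The plan is to exhibit, with positive probability, a cocycle in $Z^1(T_n,\mathbb{F}_2)$ that is not a coboundary and whose support is small enough to force $\syst^1(T_n)$ below $7/n^2$. The natural candidate is precisely the object built in the second-moment argument: on the event $X_n>0$ with $h=1$, there is a graph $G\in\mathcal{G}_{n,1}$ --- a single $5$-cycle together with isolated vertices --- that lies in $Z^1(T_n,\mathbb{F}_2)$. By Theorem~\ref{thmmoment} and the Paley--Zygmund bound \eqref{Paley} (applied with $h=1$), we have $\liminf_n \mathbb{P}(X_n>0)>0$, so it suffices to show that on this event the $5$-cycle $G$ (or some modification of it) witnesses $\syst^1(T_n)\le 7/n^2$.

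First I would compute the norm of $G$ as an element of $C^1(T_n,\mathbb{F}_2)$. Here $K=T_n$ is a $2$-dimensional complex with $|K(2)|=\binom{n-1}{2}$ top-dimensional faces and complete $1$-skeleton, so by the definition of the weight, each edge $\sigma$ has $w(\sigma)=\frac{1}{3\binom{n-1}{2}}|\{\tau\in K(2):\sigma\subset\tau\}|$. The subtlety is that in a determinantal hypertree the number of triangles through a given edge is not deterministic; however, one can check that the total over the $5$ edges of $G$ is at most a small multiple of $n$ over $3\binom{n-1}{2}\sim \tfrac{3}{2}n^2$, which is $O(1/n)$ --- too weak on its own. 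The cleaner route is to pass to the coset norm and instead bound $\|G+B^1\|$ directly, or better: observe that we only need \emph{some} nonzero class in $Z^1/B^1$ of small norm, and a $5$-cycle supported on $5$ edges has at most $5$ edges, each contained in at most $n-2$ triangles, giving $\|G\|\le \frac{5(n-2)}{3\binom{n-1}{2}} < \frac{7}{n^2}\cdot\frac{?}{}$ --- I would redo this arithmetic carefully, but the point is that $5$ edges each of weight $\Theta(1/n^2)$ (since $|K(2)|=\Theta(n^2)$ and each edge meets $\Theta(n)$ triangles, so $w(\sigma)=\Theta(n)/\Theta(n^3)=\Theta(1/n^2)$) gives $\|G\|\le 7/n^2$ for large $n$, with the constant $7$ chosen to absorb lower-order terms and the $\le n-2$ bound on triangles per edge. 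So $\syst^1(T_n)\le \|G\|\le 7/n^2$ \emph{provided} $G\notin B^1(T_n,\mathbb{F}_2)$.

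The main obstacle is therefore ruling out $G\in B^1(T_n,\mathbb{F}_2)$: a priori the $5$-cycle could be a coboundary $\delta_0 v$ for some $v\in C^0(T_n,\mathbb{F}_2)$. But $\delta_0 v$, viewed as a graph on $[n]$, is the complete bipartite graph between $\supp v$ and its complement, which is a cut; a $5$-cycle is an odd cycle and no odd cycle is a cut (equivalently, $G$ has a vertex of degree $2$, while every nonempty cut in $K_n$ induces degree $\ge n-5h \gg 2$ at every non-isolated vertex of a cut with both sides large, and a cut with one side of size $1$ or $2$ is a star or a $K_{2,n-2}$, neither of which is a $5$-cycle). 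Hence $G\notin B^1$ automatically, for \emph{every} $G\in\mathcal{G}_{n,1}$, with no probabilistic input needed --- this is exactly the mechanism behind Lemma~\ref{largedim}. Combining: on the positive-probability event $X_n>0$ we get a nonzero class in $Z^1/B^1$ of norm $\le 7/n^2$, so $\liminf_n\mathbb{P}(\syst^1(T_n)\le 7/n^2)\ge \liminf_n\mathbb{P}(X_n>0)>0$, which is the claim. I would double-check the weight normalization constant to confirm $7$ (rather than some other small integer) is the right threshold, but the structure of the argument does not depend on the precise value.
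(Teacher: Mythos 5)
Your overall architecture matches the paper's: take $h=1$, use Paley--Zygmund to get $\liminf\mathbb{P}(X_n>0)>0$, note that a $5$-cycle $G$ is never a coboundary (your cut argument is correct and is the content of Lemma~\ref{largedim}), and then bound $\|G\|$ on the event $G\in Z^1(T_n,\mathbb{F}_2)$. But the one quantitative step that actually matters --- showing $\|G\|\le 7/n^2$ rather than $O(1/n)$ --- is not established in your write-up. You correctly compute that the worst-case bound ``each edge lies in at most $n-2$ triangles'' gives $\|G\|\le \frac{5(n-2)}{3\binom{n-1}{2}}=\frac{10}{3(n-1)}=\Theta(1/n)$ and you acknowledge this is too weak; but you then recover $\Theta(1/n^2)$ only via the computation $w(\sigma)=\Theta(n)/\Theta(n^3)$, which is wrong: the normalizing denominator is $3|K(2)|=3\binom{n-1}{2}=\Theta(n^2)$, so an edge lying in $\Theta(n)$ triangles has weight $\Theta(1/n)$, not $\Theta(1/n^2)$. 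Passing to the coset norm does not help either, since $\syst^1$ is defined via $\|f\|$ itself, not $\|f+B^1\|$.

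The missing idea is structural, not probabilistic: on the event $G\in Z^1(T_n,\mathbb{F}_2)$, Lemma~\ref{lemmaZ} forces $T_n(2)\subset F_0(G)\cup F_2(G)$, so every triangle of $T_n$ containing an edge of $G$ lies in $F_2(G)$. For a $5$-cycle, $F_2(G)$ consists of the five triangles $v_{j-1}v_jv_{j+1}$, and each edge of the cycle lies in exactly two of them (Lemma~\ref{lemma8} even gives $F_2(G)\subset T_n(2)$, so ``exactly two'' rather than ``at most two''). Hence on this event each of the $5$ edges of $G$ has weight $\frac{2}{3\binom{n-1}{2}}$, giving
\[
\|G\|=\frac{10}{3\binom{n-1}{2}}=\frac{20}{3(n-1)(n-2)}<\frac{7}{n^2}
\]
for large $n$, which is exactly where the constant $7$ comes from. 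Without invoking this conditional degree-$2$ fact, your argument does not close; with it, it coincides with the paper's proof.
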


\subsection{Background and discussion}\label{sechistory}

\begin{conjecture}[Cohen and Lenstra \cite{cohen2006heuristics}]\label{coh}
  Let $p$ be a prime, and let $C_B$ be the Sylow p-subgroup of the class group of a uniform random imaginary quadratic field $K$ with $|{\Disc} K|\le B$. Then for any finite abelian $p$-group $G$, we have
  \[\lim_{B\to\infty} \mathbb{P}(C_B\cong G)=\frac{1}{|\Aut(G)|}\prod_{j=1}^{\infty}\left(1-p^{-j}\right).\]
\end{conjecture}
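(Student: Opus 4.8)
The plan is to attack this via the method of moments. The Cohen--Lenstra distribution on finite abelian $p$-groups is characterized by its surjection moments: if $C$ is Cohen--Lenstra distributed, then for every finite abelian $p$-group $A$ one has $\mathbb{E}[\#\Sur(C,A)]=1$. Moreover, by the moment-determinacy results of Wood and of Sawin--Wood, a sequence of random finite abelian $p$-groups whose $A$-moments converge to these values, and do not grow too fast, must converge in distribution to the Cohen--Lenstra law. Thus it would suffice to prove, for each fixed finite abelian $p$-group $A$, that
\[\lim_{B\to\infty}\mathbb{E}\big[\#\Sur(C_B,A)\big]=1.\]

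Next I would translate the left-hand side into a counting problem in algebraic number theory. By class field theory, surjections from the class group of $K$ onto $A$ correspond, up to the action of $\Aut(A)$ and a controlled finite fudge factor coming from roots of unity and the infinite place, to unramified abelian extensions $L/K$ with $\operatorname{Gal}(L/K)\cong A$. Averaging over imaginary quadratic $K$ with $|\Disc K|\le B$ therefore turns the moment into a weighted average number of such unramified $A$-extensions.

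The core step is then to count these extensions. The natural strategy is to parametrize the relevant $A$-extensions of varying quadratic base fields by the integral orbits of a suitable group action on a representation (a prehomogeneous vector space, in the style of Davenport--Heilbronn and Bhargava), and to count lattice points in the associated fundamental domains using the geometry of numbers together with analytic estimates for the relevant Shintani zeta function, isolating the main term and verifying that it equals the predicted constant after the class-field-theoretic normalization. One must carefully sieve out the ramified and reducible orbits and bound the error uniformly as $B\to\infty$.

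The hard part --- and the reason this conjecture remains open in general --- is precisely this counting step for arbitrary $A$. Explicit algebraic parametrizations are known only for groups of small order (for instance $A=\mathbb{Z}/2$ and $A=\mathbb{Z}/3$, where the problem reduces to Gauss genus theory and to the Davenport--Heilbronn theorem respectively), and no such parametrization, nor any substitute analytic method, is currently available for general abelian $p$-groups over number fields. In the function-field analogue this obstruction has been circumvented by replacing the geometry-of-numbers count with a computation of the stable cohomology of Hurwitz moduli spaces (Ellenberg--Venkatesh--Westerland), so one natural avenue would be to seek a number-field counterpart of such a homological stability input; absent that, the moment computation for general $A$ is beyond current reach, which is why I would expect no unconditional proof to emerge from this plan.
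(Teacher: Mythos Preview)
The statement you were asked to prove is a \emph{conjecture}, not a theorem: the paper itself says immediately after stating it that ``Conjecture~\ref{coh} is still open.'' There is therefore no proof in the paper to compare your proposal against, and no proof is expected of you.

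Your write-up correctly recognizes this. What you have produced is not a proof but an accurate high-level summary of the state of the art: the moment method of Wood reduces the problem to computing surjection moments, these translate via class field theory into counting unramified $A$-extensions of varying quadratic fields, and the obstruction is that such counts are only available for very small $A$ (Davenport--Heilbronn for $A=\mathbb{Z}/3$, etc.). Your mention of the function-field analogue via Ellenberg--Venkatesh--Westerland is also apposite. All of this is consistent with the paper's one-line remark that the conjecture remains open.

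So there is no gap to flag and no alternative approach to contrast: the paper does not attempt a proof, and your proposal is an honest explanation of why none exists.
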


The distribution on the finite abelian $p$-groups appearing on the right hand side is called Cohen-Lenstra distribution.

Conjecture~\ref{coh} is still open. However, it turned out that it is easier to establish the Cohen-Lenstra limiting behaviour in the setting of random matrix theory. The first result of this kind is by Friedman and Washington \cite{friedman1987distribution} who proved that the cokernel of a Haar uniform square matrix over the $p$-adic integers asymptotically Cohen-Lenstra distributed.\footnote{For an $n\times n$ matrix $M$ over a principal ideal domain $R$, then the cokernel $\cok(M)$ of $M$ is defined as $R^n$ factored out by the $R$-submodule generated by the rows of $M$.} Because of the very algebraic and symmetric setting, this statement can be proved by purely algebraic tools. The lack of a finite Haar-measure on $\mathbb{Z}$ made it more challenging to obtain similar results over the integers. The breakthrough came from Wood, who combined algebraic and analytic tools to develop a version of the \emph{moment method} for abelian groups \cite{wood2017distribution}. Given a random abelian group $X$ and a deterministic finite abelian group $V$, the $V$-moment of $X$ is defined as the expected number of homomorphisms from $X$ to $V$.\footnote{Usually it is more convenient to only count surjective homomorphisms, but this will not be important for our discussion here.} Wood proved that the distribution of a random abelian $p$-group is uniquely determined by its moments provided that they do not grow too fast. See also \cite{sawin2022moment,van2024symmetric,wood2022probability} for further results on the moment problem.

Then for a large class of random matrices over $\mathbb{Z}$, Wood \cite{wood2019random} established Cohen-Lenstra limiting distribution for the Sylow $p$-subgroup of the cokernel by showing that the asymptotic moments of the cokernel match the moments of the Cohen-Lenstra distribution. The calculation of moments was carried out using discrete Fourier-transform. The theorem above applies for all square matrices with independent entries, where the entries are non-degenerate in a certain sense. Note that the non-degeneracy condition implies these matrices are dense, that is, a positive fraction of the entries is non-zero.

In recent years the moment method was successfully used in many situations \cite{recent1,recent2,recent3,recent4,recent5,recent6,recent7,recent8,recent9} see also the survey of Wood \cite{wood2022probability}. Note that some of these results can be also extended to symmetric matrices, where in the limit we get some modified version of the Cohen-Lenstra distribution \cite{clancy2015cohen,clancy2015note}.

However, all the results above are about dense matrices. Also the entries are independent (or in the symmetric case the entries above the diagonal are independent.) The only results about sparse matrices were obtained by the author \cite{meszaros2020distribution,meszaros2023cohen}. For these models the entries are not independent any more.

In the paper \cite{kahle2020cohen}, it was conjectured that the $p$-torsion of the homology group of certain random simplicial complexes are also Cohen-Lenstra distributed. The homology group often can be obtained as the cokernel of some submatrix of the matrix of the boundary map of the complex. Thus, these questions fit into the topic of cokernels of random matrices. However, the matrices coming from random complexes are very different from the matrices that are usually considered in this topic. Firstly, these matrices are sparse. Secondly, the entries are really far from being independent. But maybe the third difference is the most significant obstacle to understanding the cokernels of these matrices. Namely, the matrices coming from random complexes have less distributional symmetries. Most random matrices $M$ mentioned earlier have the property that given a permutation matrix $P$, \begin{equation}\label{symmetry}PM\text{ (or in certain cases $PMP^T$) has the same distribution as $M$.}\end{equation} This is not the case for the matrices coming from random complexes. For example, the first homology group of a determinantal hypertree $T_n$ can be described by a matrix $M=J^r_{n,2}[T_n(2)]$, where the rows are indexed with the edges of a complete graph, see Lemma~\ref{Hdet}. For this matrix $M$ the distributional symmetry given in \eqref{symmetry} does not hold in general, it only holds if the permutation matrix $P$ is coming from a graph automorphism of the complete graph.\footnote{Here we assume that the columns of $M$ are listed in a uniform random order. Of course, one can ensure that \eqref{symmetry} holds in general by considering the rows in a uniform random order. However, since the rows are indexed in a natural and meaningful way, it is rather artificial and unhelpful here.} In particular, the joint distribution of two rows looks very different depending on whether the two corresponding edges are incident or not. 

For random matrix $M$ satisfying \eqref{symmetry}, usually, any moment of the cokernel of $M$ can be written as a sum where the number of terms is polynomial in the size of the matrix $M$, which often makes it possible to obtain the asymptotic value of the moment. For hypertrees it does not look possible to reduce the calculation of the moments to only polynomially many cases as we explain next.

By the universal coefficient theorem, the $V$-moment of $H_1(T_n,\mathbb{Z})$ can be expressed as $\mathbb{E}|H^1(T_n,V)|=|V|^{-(n-1)}\mathbb{E}|Z^1(T_n,V)|$. Let us restrict our attention to the case $V=\mathbb{F}_2$. Note that a cochain in $C^1(T_n,\mathbb{F}_2)$ is uniquely determined by its support. Since $T_n$ has a complete $1$-skeleton, $C^1(T_n,\mathbb{F}_2)$ can be identified with the set of graphs on $[n]$. Thus, the $\mathbb{F}_2$-moment of $H_1(T_n,\mathbb{Z})$ can be expressed, as
\begin{equation}\label{momentsum}
  2^{-(n-1)}\sum \mathbb{P}(G\in Z^1(T_n,\mathbb{F}_2)),
\end{equation}
where the summation is over all graphs on $[n]$. 

Note that $\mathbb{P}(G_1\in Z^1(T_n,\mathbb{F}_2))=\mathbb{P}(G_2\in Z^1(T_n,\mathbb{F}_2))$, when $G_1$ and $G_2$ are isomorphic graphs. Thus, one can rewrite the sum above as a sum over isomorphism classes of graphs. But this still results in a sum with $\exp(\Theta(n^2))$ terms. This is in contrast with a random matrix $M$ satisfying the symmetry condition \eqref{symmetry}, where a similar regrouping would result in a sum with a polynomial number of terms. 

The author considered a random matrix model very similar to the one coming from hypertrees but satisfying \eqref{symmetry}. For this model the Cohen-Lenstra limiting distribution was established for $p\ge 5$ \cite{meszaros2023cohen}. Thus, the main obstacle to understanding the cokernels of matrices coming from random complexes is the lack of enough symmetries.

In \cite{meszaros2024bounds}, the author proved that the sum in \eqref{momentsum} is $\exp(o(n^2))$ by basically grouping together not only isomorphic graphs, but also graphs which are close in the so-called cut norm. The argument relies on the theory of dense graph limits \cite{lovasz2006limits,lovasz2012large}, and the large deviation principle of Chatterjee and Varadhan \cite{chatterjee2011large}.

The $\mathbb{F}_2$-moment of $H_1(C,\mathbb{Z})$ can be also expressed as
\begin{equation}\label{sumiso}\sum \mathbb{P}(G\in Z^1(T_n,\mathbb{F}_2)),
\end{equation}
where the sum is over all graphs $G$ on $[v]$ such that $n$ is an isolated vertex of $G$.

Knowing how the Cohen-Lenstra limiting distribution is usually proved, one would expect that if Conjecture~\ref{conj1} was true, then the contribution of graphs with exactly $5$ edges to the sum \eqref{sumiso} is negligible, but this is false in our case. If one considers a typical graph $G$ with $5$ edges, then it will have an isolated edge. For such a graph $G$, it is easy to show that $\mathbb{P}(G\in Z^1(T_n,\mathbb{F}_2))=0$. Thus, these graphs do not contribute to the sum~\eqref{sumiso}, and this is the behaviour that one would expect. However, there are a few very structured graphs with $5$ edges, namely, the $5$-cyles, which have a significant contribution to the sum~\eqref{sumiso}. Thus, graphs with the same number of edges can show drastically different behaviour. This happens because the random matrices that we consider are very structured, and they do not have the distributional symmetry described in~\eqref{symmetry}.

Assuming that the $2$-torsion has a limiting distribution, what should it be? As an example, let us consider the limiting distribution of the cokernels of random symmetric matrices \cite{clancy2015cohen,clancy2015note,wood2017distribution}. There, using the fact that we have symmetric matrices, we can endow the cokernel with some additional algebraic structure, namely, with a perfect symmetric pairing. To define the modified version of the Cohen-Lenstra distribution that describes the limiting distribution of the cokernels of random symmetric matrices, one should also take into account the automorphisms of this extra algebraic structure. Maybe the $2$-torsion of hypertrees can be also endowed with some additional algebraic structure, which could help to find the limiting distribution.

\bigskip

\textbf{Acknowledgement:} 
The author was supported by the NSERC discovery grant of B\'alint Vir\'ag and the KKP 139502 project.

\section{Preliminaries}

Given a simplicial complex $S$, we use the notation $S(d)$ for the set of $d$-dimensional faces of $S$.

Let $M$ be a matrix. For a subset $A$ of the rows of $M$ and a subset $B$ of the columns of $M$, the corresponding submatrix of $M$ will be denoted by $M[A,B]$. If $B$ is the set of all columns, we use the notation $M[A,*]$. If $A$ is the set of all rows, we use the notation $M[*,B]$ or just simply $M[B]$.

For $d\ge 1$, let $J_{n,d}$ be a matrix indexed by ${{[n]}\choose {d}}\times {{[n]}\choose{d+1}}$ defined as follows. Let $\sigma=\{x_0,x_1,\dots,x_d\}\subset [n]$ such that $x_0<x_1<\dots<x_d$. For a $\tau\in {{[n]}\choose {d}}$, we set
\[J_{n,d}(\tau,\sigma)=\begin{cases}
(-1)^i&\text{if }\tau=\sigma\setminus\{x_i\},\\
0&\text{otherwise.}
\end{cases}
\]
Note that $J_{n,d}$ is just the matrix of the $d$th boundary map of the simplex on $[n]$.

Let
\[J_{n,d}^r=J_{n,d}\left[{{[n-1]}\choose{d}},*\right].\]

The next lemma was proved in \cite[Lemma 2]{kalai1983enumeration}.
\begin{lemma}\label{Hdet}
Let $C$ be a $2$-dimensional simplicial complex on the vertex set $[n]$ with complete $1$-skeleton and ${n-1}\choose 2$ triangular faces. Then $C$ is a hypertree if and only if $\det J^r_{n,2}[C(2)]\neq 0$. Moreover, if $C$ is a hypertree, then 
\[|H_1(T_n,\mathbb{Z})|=|\det J^r_{n,2}[C(2)]|,\]
and $H_1(T_n,\mathbb{Z})\cong \cok(J^r_{n,2}[C(2)])$.

\end{lemma}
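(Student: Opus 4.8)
The plan is to realize $H_1(C,\mathbb{Z})$ directly as the cokernel of the square matrix $N:=J^r_{n,2}[C(2)]$ via an explicit isomorphism, and then read off both halves of the lemma. By hypothesis (b), $N$ is a $\binom{n-1}{2}\times\binom{n-1}{2}$ integer matrix. Since $C$ has complete $1$-skeleton, its $1$- and $2$-chain groups and the boundary map $\partial_2\colon C_2(C,\mathbb{Z})\to C_1(C,\mathbb{Z})=\mathbb{Z}^{\binom{[n]}{2}}$ are literally the column-restriction to $C(2)$ of the simplicial boundary matrix, so $\partial_2$ has matrix $J_{n,2}[C(2)]$; moreover $Z_1(C,\mathbb{Z})$ equals the cycle group $Z_1$ of the complete graph $K_n$ (independent of the choice of triangles), and $B_1(C,\mathbb{Z})=\operatorname{im}\partial_2$.

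The crux is the claim that the projection $\pi\colon Z_1\to\mathbb{Z}^{\binom{[n-1]}{2}}$ restricting a $1$-cycle to the edges contained in $[n-1]$ is an isomorphism of abelian groups. For injectivity: if $z\in Z_1$ vanishes on all edges inside $[n-1]$, then for each $i\in[n-1]$ the only edge at $i$ in $\operatorname{supp}z$ is $\{i,n\}$, so the vertex-$i$ coordinate of $\partial_1 z$ forces $z(\{i,n\})=0$, whence $z=0$. For surjectivity: given $w\in\mathbb{Z}^{\binom{[n-1]}{2}}$, for each $i\in[n-1]$ set $z(\{i,n\})$ to be the unique integer making the vertex-$i$ coordinate of $\partial_1$ vanish; because the vertex coordinates of $\partial_1$ always sum to $0$, the vertex-$n$ coordinate then vanishes automatically, so $z\in Z_1$ and $\pi(z)=w$. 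Applying $\pi$ to $\operatorname{im}\partial_2\subseteq Z_1$ just deletes the rows of $J_{n,2}[C(2)]$ indexed by edges meeting $n$, i.e. $\pi(\operatorname{im}\partial_2)=\operatorname{im}N$. Hence $\pi$ descends to an isomorphism
\[H_1(C,\mathbb{Z})=Z_1/\operatorname{im}\partial_2\ \xrightarrow{\ \sim\ }\ \mathbb{Z}^{\binom{[n-1]}{2}}/\operatorname{im}N=\cok(N),\]
valid whether or not $C$ is a hypertree.

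From here the lemma is immediate. The complex $C$ is a hypertree iff condition (c) holds, i.e. iff $\cok(N)$ is finite; for a square integer matrix $N$, Smith normal form shows $\cok(N)$ is finite iff $\det N\neq 0$, and in that case $|\cok(N)|=|\det N|$. Combining with the isomorphism above yields $C$ a hypertree $\iff\det J^r_{n,2}[C(2)]\neq 0$, and when this holds, $|H_1(C,\mathbb{Z})|=|\det J^r_{n,2}[C(2)]|$ and $H_1(C,\mathbb{Z})\cong\cok(J^r_{n,2}[C(2)])$.

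The only step that needs genuine care is that $\pi$ is an isomorphism \emph{over $\mathbb{Z}$} rather than merely over $\mathbb{Q}$: one must verify that the edge-values at the apex $n$ forced by the cycle condition are integral, which is exactly what the telescoping identity "the vertex coordinates of $\partial_1$ sum to zero" guarantees. Everything else — identifying $\partial_2$ with $J_{n,2}[C(2)]$, tracking which rows $\pi$ deletes, and the Smith-normal-form facts — is routine bookkeeping. (One could instead argue via row spaces, showing that the $\binom{n-1}{2}$ rows of $J^r_{n,2}$ form a $\mathbb{Z}$-basis of the row lattice of $J_{n,2}$, but the direct cokernel isomorphism delivers the group structure, the determinant, and the $\det\neq0$ criterion in one stroke.)
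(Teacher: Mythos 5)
Your proof is correct. Note, however, that the paper does not prove this lemma at all: it is quoted from Kalai's enumeration paper (Lemma 2 of \cite{kalai1983enumeration}), so there is no in-paper argument to compare against. Your route is the natural and essentially standard one behind Kalai's statement: since hypotheses (a) and (b) are built into the lemma, being a hypertree reduces to condition (c), and you realize $H_1(C,\mathbb{Z})$ as $\cok\bigl(J^r_{n,2}[C(2)]\bigr)$ by showing that restriction of $1$-cycles of $K_n$ to the edges inside $[n-1]$ is a $\mathbb{Z}$-isomorphism onto $\mathbb{Z}^{\binom{n-1}{2}}$ carrying $\operatorname{im}\partial_2$ onto the column lattice of the reduced matrix. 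The two checks that genuinely matter are both in place: injectivity/surjectivity of the restriction map with integral values forced by the $\pm 1$ coefficients and the fact that the vertex coordinates of $\partial_1$ of any chain sum to zero, and the Smith-normal-form facts that a square integer matrix has finite cokernel iff its determinant is nonzero, with $|\cok|=|\det|$. This yields all three assertions of the lemma at once (the criterion for being a hypertree, the determinant formula for $|H_1|$, and the cokernel description used later in Lemma~\ref{Lemma9}), so your write-up would serve as a self-contained substitute for the citation. The only cosmetic issue is inherited from the paper itself: the displayed conclusion should read $|H_1(C,\mathbb{Z})|=|\det J^r_{n,2}[C(2)]|$ and $H_1(C,\mathbb{Z})\cong\cok(J^r_{n,2}[C(2)])$, with $C$ rather than $T_n$.
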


Given a graph $G$ on the vertex set $[n]$, let $F_k(G)$ be the set of all triangular faces whose boundary contains exactly $k$ edges from $E(G)$, that is,
\begin{equation}\label{FkGdef}
  F_k(G)=\left\{\sigma\in {{[n]}\choose{3}}\,:\, |(\partial \sigma) \cap E(G)|=k\right\},
\end{equation}
where $\partial \{a,b,c\}=\{\{b,c\},\{c,a\},\{a,b\}\}$.

The proof of the next lemma is straightforward.
\begin{lemma}\label{lemmaZ}
Let $C$ be a simplicial complex on the vertex set $[n]$ with complete $1$-skeleton, and let $G$ be a graph on $[n]$. Then \[G\in Z^1(C,\mathbb{F}_2)\text{ if and only if }C(2)\subset F_0(G)\cup F_2(G).\]
\end{lemma}

We will also rely on the following estimate: For any $F\subset {{[n]}\choose{3}}$, we have
\begin{equation}\label{hadamard}\mathbb{P}(F\subset T_n)\le \left(\frac{3}n\right)^{|F|},\end{equation}
see for example \cite[Section 2]{kahle2022topology}.
\section{First moment}\label{secfirst}

Let $G$ be a graph on the vertex set $[n]$ consisting of $h$ vertex disjoint $5$-cycles and $n-5h$ isolated vertices. More formally, we assume that $G$ can be constructed as follows. Let
\[v_{i,j}\quad (1\le i\le h,\, 0\le j\le 4)\]
be $5h$ pairwise distinct vertices from $[n]$, and let $V_0=V_0(G)$ be the set of these $5h$ vertices. Let $G$ be the graph with vertex set $[n]$ and edge set
\[E(G)=\{v_{i,j}v_{i,j+1}\,:\,1\le i\le h,\, 0\le j\le 4\}.\]
Here the second index of $v_{i,j}$ is taken mod $5$, that is, $v_{i,5}=v_{i,0}$, $v_{i,6}=v_{i,1}$ and so on. Later, we will also need the graphs $G_i$ ($1\le i\le h$) defined as follows. The vertex set $G_i$ is $[n]$, and the edge set of $G_i$ is
\[E(G_i)=\{v_{i,j}v_{i,j+1}\,:\, 0\le j\le 4\}.\]
Less formally, $G_i$ is the $i$th $5$-cycle of $G$.

Let $F_k=F_k(G)$ be defined as in \eqref{FkGdef}.

Note that
\[F_2=\cup_{i=1}^h F_{2,i},\]
where
\[F_{2,i}=F_2(G_i)=\{v_{i,j}v_{i,j+1}v_{i,j+2}\,:\, 0\le j\le 4\}.\]
Let $\overline{G}$ be the graph on the vertex set $[n-1]$ and edge set
\[E(\overline{G})={{[n-1]}\choose{2}}\setminus E(G).\]
\begin{lemma}\label{lemma8}
Assume that $n\notin V_0$. Let $C$ be a $2$-dimensional simplicial complex with ${n-1}\choose{2}$ triangular faces and complete $1$-skeleton. Then $C$ is a hypertree such that $G\in Z^1(C,\mathbb{F}_2)$ if and only if
\[C(2)=F_2\cup C_0,\]
where 
\[C_0\subset F_0,\quad |C_0|={{n-1}\choose{2}}-5h,\text{ and } \det J^r_{n,2}[E(\overline{G}),C_0]\neq 0.\]

In this case,
\[|\det J^r_{n,2}[C(2)]|=2^h\left|\det J^r_{n,2}[E(\overline{G}),C_0]\right|. \]

\end{lemma}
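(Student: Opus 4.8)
The statement to prove is Lemma~\ref{lemma8}, which characterizes when a complex $C$ with the right number of faces and complete $1$-skeleton is a hypertree carrying $G$ as a cocycle, and computes the associated determinant.

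My plan is to combine Lemma~\ref{lemmaZ} with Lemma~\ref{Hdet} and then perform a block-triangular analysis of the relevant submatrix of $J^r_{n,2}$.

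\medskip

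\emph{Step 1: Identify the shape of $C(2)$.} By Lemma~\ref{lemmaZ}, $G\in Z^1(C,\mathbb{F}_2)$ is equivalent to $C(2)\subset F_0\cup F_2$. I would first argue that in fact $F_2\subset C(2)$ whenever $C$ is a hypertree with $G\in Z^1(C,\mathbb{F}_2)$. The point is that the edges of $G$ must be coboundaries in some sense — more precisely, if a triangle $\sigma\in F_1$ is missing we'd be fine, but a triangle in $F_2$ contributes. The cleanest route: if some $\sigma\in F_{2,i}$ were \emph{not} in $C(2)$, then consider the $5$-cycle $G_i$. Since $|F_{2,i}|=5$ and each edge of the $5$-cycle lies in exactly two triangles of $F_{2,i}$, if even one triangle of $F_{2,i}$ is absent from $C(2)$ then the restriction of $G$ (or rather its Poincaré-dual $1$-chain) fails to have the right boundary relations, forcing $\det J^r_{n,2}[C(2)]=0$, contradicting that $C$ is a hypertree (Lemma~\ref{Hdet}). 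Concretely: the columns of $J^r_{n,2}$ indexed by $F_{2,i}$, when restricted to the $5$ rows indexed by $E(G_i)\cap{[n-1]\choose 2}$ (all of $E(G_i)$ since $n\notin V_0$), form a $5\times 5$ circulant-type matrix whose rank drops if any column is deleted, and these rows are zero on all columns of $F_0$. So $F_2\subset C(2)$, and writing $C_0=C(2)\setminus F_2\subset F_0$ we get $|C_0|={n-1\choose 2}-5h$ automatically from $|C(2)|={n-1\choose2}$ and $|F_2|=5h$.

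\medskip

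\emph{Step 2: Block-triangular determinant computation.} Now order the rows of $J^r_{n,2}[C(2)]$ so that the $5h$ rows indexed by $E(G)\cap{[n-1]\choose 2}=E(G)$ (using $n\notin V_0$) come first, and the remaining ${n-1\choose2}-5h$ rows indexed by $E(\overline G)$ come second; order the columns so that the $5h$ columns of $F_2$ come first and the columns of $C_0$ second. The key observations are: (i) every edge in $E(\overline G)$ lies in \emph{no} triangle of $F_2$ — because a triangle in $F_{2,i}$ has all three edges among the edges of $G_i$ plus one "diagonal", wait, more carefully: $v_{i,j}v_{i,j+1}v_{i,j+2}$ has edges $v_{i,j}v_{i,j+1}$, $v_{i,j+1}v_{i,j+2}$ (both in $E(G)$) and $v_{i,j}v_{i,j+2}$ (in $E(\overline G)$). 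So actually $F_2$-triangles \emph{do} touch $E(\overline G)$ rows. Let me instead split columns the other way and exploit that $C_0\subset F_0$ means every triangle in $C_0$ has \emph{no} edge in $E(G)$, hence the $E(G)\times C_0$ block is zero. This gives the block lower-triangular form
\[
J^r_{n,2}[C(2)]=\begin{pmatrix} J^r_{n,2}[E(G),F_2] & 0 \\ * & J^r_{n,2}[E(\overline G),C_0]\end{pmatrix},
\]
so $\det J^r_{n,2}[C(2)]=\det J^r_{n,2}[E(G),F_2]\cdot \det J^r_{n,2}[E(\overline G),C_0]$. Here I use that $E(G)\sqcup E(\overline G)={[n-1]\choose2}$, which holds since $n\notin V_0$ so no edge of $G$ touches $n$.

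\medskip

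\emph{Step 3: Evaluate the $F_2$ block.} The matrix $J^r_{n,2}[E(G),F_2]$ is block-diagonal over the $h$ cycles (since distinct $G_i$ are vertex disjoint), each block being $J^r_{n,2}[E(G_i),F_{2,i}]$, a $5\times 5$ matrix. I would compute this determinant directly: it is (up to sign) the determinant of the $5\times5$ matrix whose rows are indexed by the edges $v_{i,j}v_{i,j+1}$ and columns by the triangles $v_{i,j}v_{i,j+1}v_{i,j+2}$, with entries $\pm1$ according to the boundary map sign rule. This is a circulant-like matrix; a short computation gives $|\det|=2$. Hence $|\det J^r_{n,2}[E(G),F_2]|=2^h$, and combining with Step 2 yields $|\det J^r_{n,2}[C(2)]|=2^h|\det J^r_{n,2}[E(\overline G),C_0]|$. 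Finally, $C$ is a hypertree iff $\det J^r_{n,2}[C(2)]\neq0$ (Lemma~\ref{Hdet}) iff $\det J^r_{n,2}[E(\overline G),C_0]\neq0$, giving the stated equivalence. The "only if" direction of the characterization is Steps 1–2; the "if" direction runs the same determinant factorization in reverse to see that $\det J^r_{n,2}[C(2)]\neq0$ and that $G\in Z^1(C,\mathbb{F}_2)$ follows from $C(2)=F_2\cup C_0\subset F_0\cup F_2$ via Lemma~\ref{lemmaZ}.

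\medskip

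\emph{Main obstacle.} The only genuinely non-routine point is Step~1: showing that $F_2$ must be \emph{entirely contained} in $C(2)$, rather than just $C(2)\subseteq F_0\cup F_2$. The inclusion $C(2)\subseteq F_0\cup F_2$ is immediate from Lemma~\ref{lemmaZ}, but the reverse containment $F_2\subseteq C(2)$ requires using that $C$ is a hypertree (i.e.\ the determinant is nonzero): one must show that dropping any triangle of some $F_{2,i}$ forces a linear dependence among the columns of $J^r_{n,2}[C(2)]$. This is exactly the rank-deficiency of the $5\times4$ submatrix obtained by deleting a column from the $5\times5$ cyclic block of Step~3 — which follows because that $5\times5$ block, while having determinant $\pm2\neq0$, has the property that its columns sum (with suitable signs) to something supported only on $E(\overline G)$-rows; I'd need to check the precise linear relation certifying the rank drop. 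Everything else is bookkeeping with boundary-map signs and block matrices.
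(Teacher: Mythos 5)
Your proposal is correct and follows essentially the same route as the paper: Lemma~\ref{lemmaZ} gives $C(2)\subset F_0\cup F_2$, the linear independence of the rows indexed by $E(G)$ (which vanish on all $F_0$-columns) forces $F_2\subset C(2)$, and the block-triangular factorization with the $5\times 5$ per-cycle blocks of determinant $\pm 2$ yields the determinant identity and both directions of the equivalence. The ``main obstacle'' you flag in Step~1 is not actually an obstacle: if some column of $F_{2,i}$ were missing from $C(2)$, the five rows of $J^r_{n,2}[C(2)]$ indexed by $E(G_i)$ would be supported on at most four columns and hence automatically linearly dependent, so no explicit linear relation certifying the rank drop is needed.
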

\begin{proof}
Let $C$ be a hypertree such that $G\in Z^1(C,\mathbb{F}_2)$. By Lemma~\ref{lemmaZ}, we have $C(2)\subset F_0\cup F_2$. Consider the submatrix $J^r_{n,2}[E(G),C(2)]$ of $J^r_{n,2}[C(2)]$. All of the columns of this submatrix are equal to zero except the ones indexed by the elements of $F_2$. Since $C$ is a hypertrees, the $5h$ rows of $J^r_{n,2}[E(G),C(2)]$ are linearly independent, thus, it must have at least $5h$ non-zero columns. Since $|F_2|=5h$, we must have $F_2\subset C(2)$. Thus, $C(2)=F_2\cup C_0$,
for some
$C_0\subset F_0$ such that $|C_0|={{n-1}\choose{2}}-5h$. 

If for a triangular face $\sigma$, we have $(\partial \sigma)\cap E(G_i)\neq \emptyset$, then $(\partial \sigma)\cap E(G_k)= \emptyset$ for all $k\neq i$. Therefore, after reordering the columns and rows, $J^r_{n,2}[C(2)]$ is a block upper diagonal matrix, where the diagonal blocks are
\[J^r_{n,2}[E(\overline{G}),C_0]\text{ and }J^r_{n,2}[E(G_i),F_{2,i}] \quad (1\le i\le h).\]

Thus,
\begin{equation}\label{deteq1}|\det J^r_{n,2}[C(2)]|=|\det J^r_{n,2}[E(\overline{G}),C_0]|\prod_{i=1}^h |\det J^r_{n,2}[E(G_i),F_{2,i}|.
\end{equation}

Next we investigate the matrix $J^r_{n,2}[E(G_i),F_{2,i}]$. For simplicity, first we assume that \[(v_{i,0},v_{i,1},v_{i,2},v_{i,3},v_{i,4})=(5,1,2,3,4).\] Then the matrix matrix $J^r_{n,2}[E(G_i),F_{2,i}]$ is of the form
\[
\begin{blockarray}{cccccc}
&\{1,2,5\} & \{1,2,3\} & \{2,3,4\} & \{3,4,5\}&\{1,4,5\} \\
\begin{block}{c(ccccc)}
  \{1,5\} & -1 & 0 & 0  & 0 &  -1\\
  \{1,2\} & +1 & +1  & 0 & 0 & 0\\
  \{2,3\} & 0 & +1  & +1  & 0 & 0\\
  \{3,4\} & 0  & 0 & +1 & +1 &0\\
  \{4,5\} & 0  & 0 & 0  & +1  &+1\\
\end{block}
\end{blockarray}\quad.
 \]
A straightforward calculation gives that $|\det J^r_{n,2}[E(G_i),F_{2,i}]|=2$. For a general choice of $(v_{i,0},\dots,v_{i,4})$, we have a very similar matrix but some of the rows and columns might be multiplied with $-1$. Thus, $\det |J^r_{n,2}[E(G_i),F_{2,i}]|=2$ holds in general. Combining this with \eqref{deteq1}, we obtain that
 \[|\det J^r_{n,2}[C(2)]|=2^h\left|\det J^r_{n,2}[E(\overline{G}),C_0]\right|.\]
Thus, if $C$ is a hypertree, then $\det J^r_{n,2}[C(2)]\neq 0$, so $\det J^r_{n,2}[E(\overline{G}),C_0]\neq 0$. This finishes the proof of one direction of the lemma. The other direction also follows easily. 
\end{proof}

\begin{lemma}\label{Lemma9}
Assume that $n\notin V_0$. Then
\[\mathbb{P}(G\in Z^1(T_n,\mathbb{F}_2))=\frac{2^{2h} \det(M_G) }{n^{{n-2}\choose{2}}},\]
where $M_G=J^r_{n,2}[E(\overline{G}),F_0]) (J^r_{n,2}[E(\overline{G}),F_0])^T$.
\end{lemma}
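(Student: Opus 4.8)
The plan is to combine the definition of the determinantal measure~\eqref{measuredef}, Kalai's enumeration over hypertrees, and the structural decomposition from Lemma~\ref{lemma8}, and then recognize the resulting sum of squared determinants as a Cauchy--Binet expansion. First I would write
\[
\mathbb{P}(G\in Z^1(T_n,\mathbb{F}_2))=\sum_{C} \frac{|H_1(C,\mathbb{Z})|^2}{n^{\binom{n-2}{2}}},
\]
where the sum is over all hypertrees $C$ on $[n]$ with $G\in Z^1(C,\mathbb{F}_2)$. By Lemma~\ref{lemma8} (applicable since $n\notin V_0$), such $C$ are exactly those of the form $C(2)=F_2\cup C_0$ with $C_0\subset F_0$, $|C_0|=\binom{n-1}{2}-5h$, and $\det J^r_{n,2}[E(\overline G),C_0]\neq 0$; moreover $|H_1(C,\mathbb{Z})|=|\det J^r_{n,2}[C(2)]|=2^h|\det J^r_{n,2}[E(\overline G),C_0]|$ by Lemma~\ref{Hdet}. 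Hence the numerator becomes $2^{2h}\sum_{C_0}(\det J^r_{n,2}[E(\overline G),C_0])^2$, where the sum ranges over all size-$\bigl(\binom{n-1}{2}-5h\bigr)$ subsets $C_0$ of $F_0$ (the vanishing-determinant ones contribute $0$, so we may drop the nondegeneracy restriction).

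Next I would check the dimension bookkeeping so that Cauchy--Binet applies cleanly. The matrix $J^r_{n,2}[E(\overline G),*]$ has rows indexed by $E(\overline G)$, and $|E(\overline G)|=\binom{n-1}{2}-5h$; its columns indexed by $F_0$ number... the point is that the square submatrices $J^r_{n,2}[E(\overline G),C_0]$ picked out are exactly the maximal ones, of order $\binom{n-1}{2}-5h$. Cauchy--Binet then gives
\[
\sum_{C_0\subset F_0,\ |C_0|=\binom{n-1}{2}-5h}\bigl(\det J^r_{n,2}[E(\overline G),C_0]\bigr)^2
=\det\!\Bigl(J^r_{n,2}[E(\overline G),F_0]\,\bigl(J^r_{n,2}[E(\overline G),F_0]\bigr)^T\Bigr)=\det M_G.
\]
Substituting back yields $\mathbb{P}(G\in Z^1(T_n,\mathbb{F}_2))=2^{2h}\det(M_G)/n^{\binom{n-2}{2}}$, as claimed.

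The only real subtlety — and the step I would be most careful about — is verifying that $F_0$ together with $E(\overline G)$ has the right sizes for Cauchy--Binet, i.e. that every maximal square submatrix of $J^r_{n,2}[E(\overline G),*]$ obtained by choosing columns inside $F_0$ is of order exactly $\binom{n-1}{2}-5h$, and that there are no columns of $J^r_{n,2}[E(\overline G),*]$ outside $F_0\cup F_2$ that could matter — but a column indexed by a triangle $\sigma$ with $(\partial\sigma)\cap E(G)\neq\emptyset$ and $\sigma\notin F_2$ lies in $F_1$ or (for triangles with $3$ edges of $G$) would have to be within a single $5$-cycle, which is impossible, so $\binom{[n]}{3}=F_0\sqcup F_1\sqcup F_2$ and restricting to rows $E(\overline G)$ kills nothing extra. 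One should also note that $|H_1(C,\mathbb Z)|$ in Lemma~\ref{Hdet} is stated for $T_n$ but holds verbatim for any hypertree $C$. Once these are in place, the computation is a direct chain of substitutions plus one application of Cauchy--Binet, with no further obstacle.
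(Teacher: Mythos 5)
Your proposal is correct and follows essentially the same route as the paper: express the probability via the determinantal measure \eqref{measuredef} and Lemma~\ref{Hdet}, use Lemma~\ref{lemma8} to reduce the sum over hypertrees containing $G$ in $Z^1$ to a sum of $2^{2h}|\det J^r_{n,2}[E(\overline{G}),C_0]|^2$ over column subsets $C_0\subset F_0$ of size $\binom{n-1}{2}-5h$, and then apply Cauchy--Binet to identify this with $2^{2h}\det(M_G)$. The dimension bookkeeping you flag (that $|E(\overline{G})|=\binom{n-1}{2}-5h$, so the relevant submatrices are exactly the maximal square ones) is the right thing to check and is fine.
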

\begin{proof}
Combining Lemma~\ref{lemma8} with the Cauchy-Binet formula, we have
\[\sum_{\substack{C\in \mathcal{C}(n,2)\\G\in Z^1(C,\mathbb{F}_2)}} |\det J^r_{n,2}[C]|^2=\sum_{\substack{C_0\subset F_0\\|C_0|={{n-1}\choose {2}}-5h}} 2^{2h}|\det J^r_{n,2}[E(\overline{G}),C_0]|^2=2^{2h} \det(M_G).\]
Combining this with \eqref{measuredef} and Lemma~\ref{Hdet}, the statement follows.
\end{proof}

\begin{lemma}\label{lemma10}
Assuming that $n$ is large enough, we have
\[\mathbb{P}(G\in Z^1(T_n,\mathbb{F}_2))\ge 2^{2h} n^{-5h} e^{-80h}\qquad\text{for all }G\in\mathcal{G}_n.\]
\end{lemma}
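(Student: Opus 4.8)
The plan is to start from the exact formula in Lemma~\ref{Lemma9}, namely $\mathbb{P}(G\in Z^1(T_n,\mathbb{F}_2))=2^{2h}\det(M_G)/n^{\binom{n-2}{2}}$, and to lower bound the right hand side. Since $2^{2h}$ is already isolated, the task reduces to showing $\det(M_G)\ge n^{\binom{n-2}{2}-5h}e^{-80h}$ for all $G\in\mathcal{G}_n$, i.e. $\det(M_G)$ differs from $n^{\binom{n-2}{2}-5h}$ by at most a factor $e^{80h}$. Here $M_G=J^r_{n,2}[E(\overline{G}),F_0]\,(J^r_{n,2}[E(\overline{G}),F_0])^T$ is a square matrix of size $|E(\overline{G})|=\binom{n-1}{2}-5h$.

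First I would recall the analogous computation for the full complex (the case $h=0$), where $E(\overline G)=\binom{[n-1]}{2}$, $F_0=\binom{[n]}{3}$, and $J^r_{n,2}[\binom{[n-1]}{2},\binom{[n]}{3}](J^r_{n,2})^T$ has a known determinant: by Kalai's formula (or a direct eigenvalue computation, since $J^r_{n,2}(J^r_{n,2})^T$ is essentially the up-Laplacian on the simplex restricted to edges avoiding vertex $n$, whose eigenvalues are $n$ with high multiplicity) this determinant is exactly $n^{\binom{n-2}{2}}$. The matrix $M_G$ is obtained from this baseline object by (i) deleting the $5h$ rows/columns indexed by edges of $G$, and (ii) deleting from the Gram structure the contribution of the $5h$ columns of $J^r_{n,2}$ indexed by the faces in $F_2$ (the faces containing a $G$-edge). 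So the strategy is a rank-$O(h)$ perturbation / interlacing argument: $M_G$ is a principal submatrix of $L':=J^r_{n,2}[\binom{[n-1]}{2}\setminus E(G),F_0]\,(\cdots)^T$, and $L'$ in turn equals $(L-R)$ restricted to the surviving index set, where $L=J^r_{n,2}(J^r_{n,2})^T$ restricted to those rows and $R=J^r_{n,2}[\cdot,F_2](J^r_{n,2}[\cdot,F_2])^T$ has rank at most $|F_2|=5h$.

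The key steps in order: (1) Write $M_G = A - B$ where $A = J^r_{n,2}[E(\overline G),\binom{[n]}{3}](\cdots)^T$ is the principal submatrix of the baseline Laplacian on the rows $E(\overline G)$, and $B=J^r_{n,2}[E(\overline G),F_2](\cdots)^T$ is positive semidefinite of rank $\le 5h$. (2) Bound $\det(A)$ from below: since $A$ is a principal submatrix obtained from the full $L$ by deleting $5h$ rows and columns, Cauchy interlacing gives $\det(A)\ge$ (product of the $|E(\overline G)|$ smallest eigenvalues of $L$), and since $L$ has all eigenvalues either $n$ (with multiplicity $\binom{n-2}{2}$) or $0$ appropriately — actually $L$ on the full edge set $\binom{[n-1]}{2}$ has eigenvalue $n$ with multiplicity $\binom{n-2}{2}$ and the remaining $n-2$ eigenvalues account for the rest — one gets $\det(A)\ge n^{\binom{n-2}{2}-O(h)}$ times a harmless factor; more carefully one should track that deleting rows from a PSD matrix whose nonzero spectrum is concentrated at $n$ costs at most $n^{5h}$, which is fine. (3) Control $\det(M_G)=\det(A-B)$ versus $\det(A)$: write $\det(A-B)=\det(A)\det(I-A^{-1}B)$ and bound $\det(I-A^{-1}B)$ using that $B$ has rank $\le 5h$, so $I-A^{-1}B$ has at most $5h$ eigenvalues different from $1$, each of which we must show is bounded below in absolute value away from $0$ — this is where the specific combinatorics of $5$-cycles enters, guaranteeing $A-B$ stays nonsingular and in fact $\det(A-B)\ge e^{-80h}\det(A)/n^{O(h)}$. (4) Assemble: $\det(M_G)\ge n^{\binom{n-2}{2}-5h}e^{-80h}$, hence the claimed probability bound.

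The main obstacle I expect is step (3): bounding $\det(I - A^{-1}B)$ from below, equivalently showing the $5h$ "perturbed" eigenvalues do not collapse to zero and do not blow up. Controlling $A^{-1}$ requires understanding the small eigenvalues of the principal submatrix $A$ (the near-kernel of the restricted Laplacian), and $B$ must be estimated on that subspace. A cleaner route, and the one I would actually pursue, is to avoid $A^{-1}$ altogether and instead go back to Cauchy–Binet directly: $\det(M_G)=\sum_{C_0}\det(J^r_{n,2}[E(\overline G),C_0])^2$ over $C_0\subset F_0$ with $|C_0|=\binom{n-1}{2}-5h$, i.e. over complements-in-$F_0$ of $5h$-subsets, and lower bound this by exhibiting \emph{one} good choice of $C_0$ (or a controlled family) for which the determinant is explicitly computable — e.g. extend the $5$-cycles to a spanning-tree-like structure and use Lemma~\ref{lemma8}'s block decomposition in reverse. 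In fact the slickest argument: by Lemma~\ref{lemma8}, $\det(M_G)=\sum_{C}|\det J^r_{n,2}[C(2)]|^2/2^{2h}$ over hypertrees $C$ with $G\in Z^1(C,\mathbb{F}_2)$, so it suffices to produce \emph{enough} such hypertrees with not-too-small $|H_1|$; comparing with the $h=0$ count $n^{\binom{n-2}{2}}$ and estimating the fraction of the sum lost by forcing $F_2\subset C(2)$ (a constraint on only $5h$ of the $\binom{n-1}{2}$ faces, each present with "probability" $\approx 3/n$ by \eqref{hadamard}-type heuristics, contributing roughly $(n/3)^{5h}$ which the $n^{-5h}$ absorbs) gives the bound with room to spare. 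I would reconcile these two viewpoints and pick whichever makes the $e^{-80h}$ constant cleanest to extract.
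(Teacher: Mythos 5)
Your reduction via Lemma~\ref{Lemma9} to the bound $\det(M_G)\ge n^{\binom{n-2}{2}-5h}e^{-80h}$ is exactly right, and your decomposition $M_G=A-B$ with $A=J^r_{n,2}[E(\overline G),\binom{[n]}{3}]\,(\cdots)^T$ is the same starting point as the paper's (there $A=nI-N_G$ via Kalai's identity $J^r_{n,2}(J^r_{n,2})^T=nI-(J_{n-1,1})^TJ_{n-1,1}$). But there is a concrete error that derails your step (3): the columns you must subtract from $A$ to obtain $M_G$ are those indexed by $F_1\cup F_2$, not just $F_2$, and $|F_1\cup F_2|=5h(n-3)$, so $B$ has rank up to $5hn$, not $5h$. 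Hence $I-A^{-1}B$ can have $\Theta(hn)$ eigenvalues different from $1$, and a plan that controls only $5h$ perturbed eigenvalues cannot work. This is not a technicality: the constant $e^{-80h}$ in the lemma arises precisely from losing a factor $(1-14/n)^{5hn}\approx e^{-70h}$ over these $\Theta(hn)$ directions. Moreover, bounding $\det(I-A^{-1}B)$ from below is genuinely delicate because $A$ has $n-2$ eigenvalues equal to $1$ (its near-kernel contains $B^1(\overline G,\mathbb{R})$), so $\|A^{-1}\|_{op}=1$ and the crude bound $\|A^{-1}B\|_{op}\le\|B\|_{op}$ does not keep $I-A^{-1}B$ away from singularity. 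You correctly flag this as the main obstacle, but you do not resolve it, and your fallbacks do not close the gap either: exhibiting one good $C_0$ in Cauchy--Binet lower bounds $\det(M_G)$ by a single squared determinant, which is nowhere near $n^{\binom{n-2}{2}-5h}$; and the appeal to \eqref{hadamard} is illegitimate in that direction, since \eqref{hadamard} is an \emph{upper} bound on $\mathbb{P}(F\subset T_n)$ and cannot lower bound the weighted count of hypertrees containing $F_2$.

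For comparison, the paper's proof avoids $A^{-1}$ entirely and works with quadratic forms. It shows $\|B\|_{op}\le 14$ by a proper-colouring trick (the intersection graph of $F_1\cup F_2$ over shared edges of $\overline G$ has maximum degree $6$, so $B$ splits into $7$ block-diagonal PSD pieces, each of operator norm at most $2$), records $\operatorname{rank}B\le 5hn$ and $\operatorname{rank}N_G\le n-2$, observes separately that $M_G-I\succeq 0$ with $B^1(\overline G,\mathbb{R})$ in its kernel (giving $n-2$ eigenvalues exactly $1$; this is where $n\notin V_0$ is used), and then assembles the full spectrum by Courant--Fischer: $\binom{n-2}{2}-5h-5nh$ eigenvalues at least $n$, then $5nh$ eigenvalues at least $n-14$, then $n-2$ eigenvalues equal to $1$. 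If you replace your rank count for $B$ by $5hn$ and substitute an operator-norm bound for the $A^{-1}$ step, your outline converges to this argument; as written, it has a genuine gap at its central quantitative step.
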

\begin{proof}
Let $G\in \mathcal{G}_n$. By symmetry we may assume that $n\not\in V_0$. Let $F_0^r=F_0\cap {{[n-1]}\choose{3}}$. Note that
\[M_G-I=J^r_{n,2}[E(\overline{G}),F_0^r] (J^r_{n,2}[E(\overline{G}),F_0^r])^T.\]
Note that since $\overline{G}$ is connected $B^1(\overline{G},\mathbb{R})$ has dimension $|V(\overline{G})|-1=n-2$. Also, $B^1(\overline{G},\mathbb{R})$ is contained in the kernel of $J^r_{n,2}[E(\overline{G}),F_0^r] (J^r_{n,2}[E(\overline{G}),F_0^r])^T$. Thus, $1$ is an eigenvalue of $M_G$ with multiplicity at least $n-2$. 

We have
\[J^r_{n,2}(J^r_{n,2})^T-nI=-(J_{n-1,1})^T J_{n-1,1},\]
see \cite[(2) in the proof of Lemma 3]{kalai1983enumeration}.

Thus, 
\begin{equation}\label{meq1}
  J^r_{n,2}[E(\overline{G}),*](J^r_{n,2}[E(\overline{G}),*])^T-nI=-(J_{n-1,1}[*,E(\overline{G})])^TJ_{n-1,1}[*,E(\overline{G})].
\end{equation}

Since ${{[n]}\choose {3}}$ is the disjoint union of $F_0$ and $F_1\cup F_2$, we have
\begin{multline}\label{meq2}
  J^r_{n,2}[E(\overline{G}),*](J^r_{n,2}[E(\overline{G}),*])^T\\=J[E(\overline{G}),F_0](J^r_{n,2}[E(\overline{G}),F_0])^T+J^r_{n,2}[E(\overline{G}),F_1\cup F_2](J^r_{n,2}[E(\overline{G}),F_1\cup F_2])^T.
\end{multline}
Let us introduce the notations \begin{align*}N_G&=(J_{n-1,1}[*,E(\overline{G})])^TJ_{n-1,1}[*,E(\overline{G})],\\M_G'&=J^r_{n,2}[E(\overline{G}),F_1\cup F_2](J^r_{n,2}[E(\overline{G}),F_1\cup F_2])^T.\end{align*} 
Combining \eqref{meq1} and \eqref{meq2}, we obtain that
\[M_G=nI-N_G-M_G'.\]
Note that $J_{n-1,1}[*,E(\overline{G})]$ has $n-1$ rows, which are linearly dependent since their sum is $0$. Thus, the rank of $J_{n-1,1}[\cdot,E(\overline{G})]$ is at most $n-2$. Therefore, $N_G$ has rank at most $n-2$. Thus,
\[\dim \ker N_G\ge {{n-1}\choose{2}}-5h-(n-2)={{n-2}\choose{2}}-5h.\]

Let us consider the graph on the vertex set $F_1\cup F_2$, where two distinct triangular faces $\sigma_1,\sigma_2\in F_1\cup F_2$ are connected if $\partial \sigma_1\cap \partial \sigma_2\cap E(\overline{G})\neq \emptyset$. For each $\tau\in E(\overline{G})$, there are at most $4$ faces $\sigma\in F_1\cup F_2$ such that $\tau\in \partial \sigma$. Since for each face $\sigma\in F_1\cup F_2$, we have $|(\partial \sigma)\cap E(\overline{G})|\le 2$, the maximum degree of this graph is at most $2(4-1)=6$. Therefore, this graph has a proper coloring with $7$ colors. That is, there is partition of $F_1\cup F_2$ into sets $L_1,L_2,\dots,L_7$ such that if $\sigma_1,\sigma_2\in L_i$, and $\sigma_1\neq \sigma_2$, then $\partial \sigma_1\cap \partial \sigma_2\cap E(\overline{G})= \emptyset$. Let \[Q_i=J^r_{n,2}[E(\overline{G}),L_i](J^r_{n,2}[E(\overline{G}),L_i])^T.\] Then $M_G'=\sum_{i=1}^7 Q_i$. Moreover, $Q_i$ is block diagonal matrix, where each block is either
\[(0),\quad (1),\quad \begin{pmatrix} 1&1\\1&1\end{pmatrix},\quad \text{or}\quad \begin{pmatrix} 1&-1\\-1&1\end{pmatrix}.\]
Thus, the operator norm of $Q_i$ is at most $2$. Thus, $M_G'$ has operator norm at most $14$. The rank of $M_G'$ is clearly at most $|F_1\cup F_2|\le 5hn$. It follows that $\dim (\ker N_G\cap \ker M_G')\ge \dim \ker N_G-5nh$. All the vectors in $\ker N_G\cap \ker M_G'$ are eigenvectors of $M_G$ with eigenvalue $n$. Moreover, for any vector in $v\in\ker N_G$, we have $v^TM_G v\ge (n-14)\|v\|_2^2$. Thus, by the Courant-Fischer theorem, we obtain the following statement: Let us consider the eigenvalues of $M_G$ in decreasing order. Then the first ${{n-2}\choose 2}-5h-5nh$ eigenvalues are all at least $n$, the next $5nh$ eigenvalues are all at least $n-14$. Finally, we also have at least $n-2$ eigenvalues equal to $1$. Assuming that $n>15$, these are at least ${{n-1}\choose2}-5h$ eigenvalues. Thus, we found all the eigenvalues of $M_G$. Thus,
\[\det M_G\ge n^{{{n-2}\choose 2}-5h-5nh} (n-14)^{5nh}. \]
Combining this Lemma~\ref{Lemma9}, we obtain that
\begin{align*}\mathbb{P}(G\in Z^1(T_n,\mathbb{F}_2))&\ge \frac{2^{2h}  n^{{{n-2}\choose 2}-5h-5nh} (n-14)^{5nh}}{n^{{n-2}\choose {2}}}\\&=2^{2h} n^{-5h}\left(1-\frac{14}{n}\right)^{5nh}\\&\ge 2^{2h} n^{-5h} e^{-80h}
\end{align*}
for all large enough $n$.
\end{proof}
Any elements of $\mathcal{G}_n$ can be described by a sequence of $5h$ pairwise distinct vertices $v_{i,j}$ ($1\le i\le h$, $0\le j\le 4$) like above. However, given a graph in $\mathcal{G}_n$ this choice of vertices is not unique. A simple argument gives that we have $10^h h!$ choices. Thus,
\[|\mathcal{G}_n|=\frac{1}{10^h h!}\prod_{i=0}^{5h-1}(n-i).\]Therefore, for all large enough $n$,
\begin{equation}\label{Gnsize}\frac{1}{10^h h!}n^{5h}\ge |\mathcal{G}_n| \ge \frac{1}{20^h h!}n^{5h}.\end{equation}

Then combining Lemma~\ref{lemma10} and \eqref{Gnsize}, we obtain the estimate in Theorem~\ref{thmmoment} on the first moment of $X_n$, that is,
\[\mathbb{E}X_n\ge |\mathcal{G}_n|2^{2h} n^{-5h} e^{-80h}\ge \frac{e^{-100h}}{h!}.\]

\section{Second moment}

Given $G_0\in \mathcal{G}_n$ and $k\ge 0$, let us define
\[\mathcal{G}_n(G_0,k)=\{G_1\in \mathcal{G}_n\,:\, |F_2(G_0)\cap F_2(G_1)|=k\}.\]
Recall that for $G\in \mathcal{G}_n$, $V_0(G)$ denotes the set of the $5h$ non-isolated vertices of~$G$. 
\begin{lemma}\label{lemmametszet}
If $G_1\in \mathcal{G}_n(G_0,k)$, then $|V_0(G_0)\cap V_0(G_1)|\ge k$.

\end{lemma}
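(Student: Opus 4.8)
The plan is to show that every triangular face counted by $k$ forces all three of its vertices into $V_0(G_0)\cap V_0(G_1)$, and then to convert this into the bound $\ge k$ by working inside one $5$-cycle of $G_0$ at a time.

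First I would record the basic structure. For any $G\in\mathcal{G}_n$, every face of $F_2(G)$ has the form $\{v_{i,j},v_{i,j+1},v_{i,j+2}\}$ and hence all three of its vertices lie in $V_0(G)$; moreover $F_2(G)=\bigcup_{i=1}^{h}F_{2,i}(G)$ is a \emph{disjoint} union, because the vertex sets of the $h$ five-cycles of $G$ are pairwise disjoint. Applying this to $G_0$ and $G_1$ simultaneously: if $\sigma\in F_2(G_0)\cap F_2(G_1)$, then all three vertices of $\sigma$ lie in $W:=V_0(G_0)\cap V_0(G_1)$, and $\sigma$ lies in $F_{2,i}(G_0)$ for exactly one index $i$.

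Next I would set up the cycle-by-cycle accounting. For $1\le i\le h$ let $m_i=|F_{2,i}(G_0)\cap F_2(G_1)|$ and let $s_i$ be the number of vertices of the $i$-th five-cycle of $G_0$ that lie in $W$. Since the sets $F_{2,i}(G_0)$ partition $F_2(G_0)$ we have $k=\sum_{i=1}^{h}m_i$, and since the vertex sets of the five-cycles of $G_0$ partition $V_0(G_0)\supseteq W$ we have $|W|=\sum_{i=1}^{h}s_i$; so it suffices to prove $s_i\ge m_i$ for each fixed $i$. The faces in $F_{2,i}(G_0)\cap F_2(G_1)$ are exactly the faces $\{v_{i,j},v_{i,j+1},v_{i,j+2}\}$ with $j$ ranging over some set $T$ of residues mod $5$ of size $m_i$; for each such $j$ the three vertices $v_{i,j},v_{i,j+1},v_{i,j+2}$ lie in $W$, so the set of residues $j'$ with $v_{i,j'}\in W$ contains $\bigcup_{j\in T}\{j,j+1,j+2\}$, whence $s_i\ge\bigl|\bigcup_{j\in T}\{j,j+1,j+2\}\bigr|$. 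Finally, a union of $m_i\ge 1$ consecutive triples in $\mathbb{Z}/5\mathbb{Z}$ has size at least $m_i$: if $m_i\le 3$ the union already contains one full triple and hence at least $3\ge m_i$ elements, and if $m_i\ge 4$ the union is all of $\mathbb{Z}/5\mathbb{Z}$. Summing $s_i\ge m_i$ over $i$ gives $|W|\ge k$.

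I do not expect a genuine obstacle: the argument is essentially bookkeeping built on the vertex-disjointness of the five-cycles, and the only step with real content is the trivial covering fact for consecutive triples in $\mathbb{Z}/5\mathbb{Z}$. The one thing to be careful about is using the two partitions — of $F_2(G_0)$ into the $F_{2,i}(G_0)$, and of $V_0(G_0)$ into the five-cycle vertex sets — consistently, so that the identities $k=\sum_i m_i$ and $|W|=\sum_i s_i$ really hold.
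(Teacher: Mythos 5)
Your proof is correct, but it routes the counting differently from the paper. Both arguments hinge on the same key observation: any $\sigma\in F_2(G_0)\cap F_2(G_1)$ has all three of its vertices in $W=V_0(G_0)\cap V_0(G_1)$, because a face of $F_2(G)$ is necessarily of the form $\{v_{i,j},v_{i,j+1},v_{i,j+2}\}$ for one of the $5$-cycles of $G$. Where you diverge is in converting a count of faces into a count of vertices. The paper does this with a single global double count: it counts incidences $(\sigma,v)$ with $\sigma\in F_2(G_0)\cap F_2(G_1)$ and $v\in W\cap\sigma$, getting exactly $3k$ from the face side and at most $3|W|$ from the vertex side since each vertex of $V_0(G_0)$ lies in only three faces of $F_2(G_0)$. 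You instead localize to one $5$-cycle of $G_0$ at a time, using the two partitions $F_2(G_0)=\bigsqcup_i F_{2,i}(G_0)$ and $V_0(G_0)=\bigsqcup_i V(G_i)$, and then prove $s_i\ge m_i$ via the covering fact that a union of $m_i$ distinct consecutive triples in $\mathbb{Z}/5\mathbb{Z}$ has at least $m_i$ elements (at least $3$ if $1\le m_i\le 3$, all of $\mathbb{Z}/5\mathbb{Z}$ if $m_i\ge 4$). Your bookkeeping is all justified — the disjointness of the $F_{2,i}(G_0)$ and the identity $|W|=\sum_i s_i$ both follow from the vertex-disjointness of the cycles — so the argument is complete; it is just slightly longer than the paper's incidence count, whose ``each vertex lies in at most three shared faces'' bound is essentially the dual of your triple-covering step. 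A minor benefit of your version is that it exposes the per-cycle structure and gives the marginally sharper local inequality $s_i\ge\min(3,5)\ge m_i$ region by region, but for the stated lemma the two proofs deliver the same bound.
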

\begin{proof}
Let
\[N=|\{(\sigma,v)\,:\, \sigma\in F_2(G_0)\cap F_2(G_1), v\in V_0(G_0)\cap V_0(G_1)\cap \sigma \}|. \]

If $\sigma\in F_2(G_0)\cap F_2(G_1)$, then $|\sigma|=3$ and we have $v\in V_0(G_0)\cap V_0(G_1)\cap \sigma $ for any $v\in\sigma$. Thus, $N=3|F_2(G_0)\cap F_2(G_1)|=3k$. Given a $v\in V_0(G_0)\cap V_0(G_1)$, we have three triangular faces $\sigma\in F_2(G_0)$ such that $v\in \sigma$, so we have at most three $\sigma\in F_2(G_0)\cap F_2(G_1)$ such that $v\in \sigma$. Therefore, $3k=N\le 3 |V_0(G_0)\cap V_0(G_1)|$.
\end{proof}

\begin{lemma}\label{Gnksize}
We have
\[|\mathcal{G}_n(G_0,k)|\le \frac{n^{5h-k}(5h)^k}{10^h h!} {{5h}\choose{k}}.\]

\end{lemma}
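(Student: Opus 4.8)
The plan is to count the sequences that represent the graphs of $\mathcal{G}_n(G_0,k)$, rather than the graphs directly. Recall that every graph in $\mathcal{G}_n$ is described by a sequence $(v_{i,j})_{1\le i\le h,\ 0\le j\le 4}$ of $5h$ pairwise distinct vertices of $[n]$, and that a fixed graph in $\mathcal{G}_n$ admits exactly $10^h h!$ such representing sequences. Hence
\[|\mathcal{G}_n(G_0,k)|=\frac{1}{10^h h!}\,\#\left\{(v_{i,j}):\ v_{i,j}\ \text{pairwise distinct and the associated graph lies in}\ \mathcal{G}_n(G_0,k)\right\},\]
and it suffices to bound the number of sequences on the right-hand side by $\binom{5h}{k}(5h)^k\, n^{5h-k}$.

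The key input is Lemma~\ref{lemmametszet}. If the graph $G_1$ associated to a sequence $(v_{i,j})$ lies in $\mathcal{G}_n(G_0,k)$, then $|V_0(G_0)\cap V_0(G_1)|\ge k$; since $V_0(G_1)=\{v_{i,j}:1\le i\le h,\ 0\le j\le 4\}$, this says that at least $k$ of the $5h$ entries of the sequence lie in the fixed $5h$-element set $V_0(G_0)$. (In particular, if $k>5h$ there are no such sequences, and then $\binom{5h}{k}=0$ so the bound is trivial; so I would assume $k\le 5h$.)

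Next I would bound the number of such sequences by a union bound over the set $S$ of coordinates whose value is required to lie in $V_0(G_0)$: there are $\binom{5h}{k}$ choices of a $k$-element set $S\subseteq\{(i,j):1\le i\le h,\ 0\le j\le 4\}$; there are at most $(5h)^k$ ways to assign values from $V_0(G_0)$ to the coordinates in $S$; and there are at most $n^{5h-k}$ ways to assign arbitrary values from $[n]$ to the remaining $5h-k$ coordinates (dropping the distinctness requirement only weakens the estimate). Since every sequence with at least $k$ entries in $V_0(G_0)$ is accounted for by at least one such $S$ (take any $k$-subset of its coordinates whose values lie in $V_0(G_0)$), the number of admissible sequences is at most $\binom{5h}{k}(5h)^k\, n^{5h-k}$. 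Dividing by $10^h h!$ yields the claimed inequality.

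There is no serious obstacle here: once Lemma~\ref{lemmametszet} is available, the argument is elementary counting. The only point requiring a little care is that the union-bound estimate genuinely dominates the true count, which is clear because we sum over \emph{all} $k$-subsets of coordinates and each admissible sequence has at least one $k$-subset of its coordinates taking values in $V_0(G_0)$.
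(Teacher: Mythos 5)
Your proof is correct and follows essentially the same route as the paper: both use Lemma~\ref{lemmametszet} to force at least $k$ of the $5h$ representing vertices into $V_0(G_0)$, bound the number of representing sequences by $\binom{5h}{k}(5h)^k n^{5h-k}$, and divide by the $10^h h!$ sequences per graph. You merely spell out the union bound over the choice of $k$ coordinates that the paper leaves implicit.
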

\begin{proof}
A graph $G_1\in \mathcal{G}_n(G_0,k)$ can be described by $5h$ pairwise distinct vertices $v_{i,j}$ ($1\le i\le h$, $0\le j\le 4$). By Lemma~\ref{lemmametszet} at least $k$ of these vertices must be from $V_0(G_0)$. There are at most ${{5h}\choose{k}}n^{5h-k}(5h)^k$ such choices of $(v_{i,j})$, so the lemma follows by the same argument as \eqref{Gnsize}.
\end{proof}

Let $G_0,G_1\in \mathcal{G}_n$. It follows from Lemma~\ref{lemma8} that on the event that $G_0\in Z^1(T_n,\mathbb{F}_2)$ and $G_1\in Z^1(T_n,\mathbb{F}_2)$ both holds, we have $F_2(G_0)\cup F_2(G_1)\subset T_n$. Thus, assuming that $G_1\in \mathcal{G}_n(G_0,k)$, we have
\begin{align}\mathbb{P}(G_0\in Z^1(T_n,\mathbb{F}_2), G_1\in Z^1(T_n,\mathbb{F}_2))\label{G0G1est}&\le \mathbb{P}(F_2(G_0)\cup F_2(G_1)\subset T_n)\\&\le \left(\frac{3}n\right)^{|F_2(G_0)\cup F_2(G_1)|}\nonumber\\&= \left(\frac{3}n\right)^{10h-k}\nonumber,\end{align}
where the second inequality follows from \eqref{hadamard}.

Observe that
\begin{align*}
\mathbb{E}X_n^2&=\sum_{G_0\in \mathcal{G}_n}\sum_{G_1\in \mathcal{G}_n} \mathbb{P}(G_0\in Z^1(T_n,\mathbb{F}_2), G_1\in Z^1(T_n,\mathbb{F}_2))\\
&=\sum_{G_0\in \mathcal{G}_n}\sum_{k=0}^{5h}\sum_{G_1\in \mathcal{G}_n(G_0,k)} \mathbb{P}(G_0\in Z^1(T_n,\mathbb{F}_2), G_1\in Z^1(T_n,\mathbb{F}_2)).
\end{align*}
Combining this with the estimates given in \eqref{G0G1est} and Lemma~\ref{Gnksize}, we have
\begin{align*}
\mathbb{E}X_n^2&\le \sum_{G_0\in \mathcal{G}_n}  \sum_{k=0}^{5h}  \frac{n^{5h-k}(5h)^k}{10^h h!} {{5h}\choose{k}} \left(\frac{3}n\right)^{10h-k}\\&\le \sum_{G_0\in \mathcal{G}_n}  \frac{3^{10h}}{ h!}n^{-5h}\sum_{k=0}^{5h} {{5h}\choose{k}} (5h)^k\\&=|\mathcal{G}_n| \frac{3^{10h}}{ h!}n^{-5h} (5h+1)^{5h}\\&\le \frac{(100h)^{5h}}{(h!)^2},
\end{align*}
obtaining the estimate in Theorem~\ref{thmmoment} on the second moment of $X_n$.

\section{Finishing the proof of Theorem~\ref{thm1}}\label{secfinish}

\begin{lemma}\label{largedim}
Let $G\in\mathcal{G}_{n,h}$. If for a hypertree $S\in \mathcal{C}(n,2)$, we have $G\in Z^1(S,\mathbb{F}_2)$, then $\dim H_1(S,\mathbb{F}_2)\ge h$.
\end{lemma}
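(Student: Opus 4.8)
The plan is to show that $G$ represents a nonzero class in $H^1(S,\mathbb{F}_2)$, and more precisely that the $h$ cocycles $G_1,\dots,G_h$ (the individual $5$-cycles of $G$) are linearly independent in $H^1(S,\mathbb{F}_2)$; by the universal coefficient theorem / duality over $\mathbb{F}_2$ this forces $\dim H_1(S,\mathbb{F}_2)\ge h$ as well. First I would record that since $G\in Z^1(S,\mathbb{F}_2)$ and $Z^1$ is a group, each $G_i\in Z^1(S,\mathbb{F}_2)$: indeed by Lemma~\ref{lemmaZ} the hypothesis says $S(2)\subset F_0(G)\cup F_2(G)$, and since $F_2(G)=\bigcup_i F_{2,i}$ with the $F_{2,i}$ pairwise disjoint and each triangle meeting at most one of the $5$-cycles, one checks $F_0(G)\cup F_2(G)\subset F_0(G_i)\cup F_2(G_i)$, so $G_i\in Z^1(S,\mathbb{F}_2)$ too. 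More generally the same argument shows that for any subset $I\subset[h]$ the graph $\sum_{i\in I}G_i$ (disjoint union of those $5$-cycles) lies in $Z^1(S,\mathbb{F}_2)$.

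The heart of the argument is then to show that for every nonempty $I\subset[h]$, the cocycle $g_I:=\sum_{i\in I}G_i$ is not a coboundary, i.e. $g_I\notin B^1(S,\mathbb{F}_2)=\Image(\delta_0)$. Since $S$ has complete $1$-skeleton, $\delta_0$ applied to a vertex-function $f\in C^0(S,\mathbb{F}_2)$ is the "cut" graph $\{uv: f(u)\ne f(v)\}$; equivalently $B^1$ consists of all complete bipartite graphs (cuts) $K_{A,[n]\setminus A}$. So the claim reduces to: a disjoint union of $5$-cycles on $[n]$ is never a cut. This is elementary — a cut is an even subgraph only in a bipartite-class sense, but more directly: restrict the cut to the vertex set of a single $5$-cycle $G_i$; the induced subgraph of a cut is again a cut of that vertex set, hence bipartite, but $C_5$ is an odd cycle and not bipartite, contradiction. (One must be slightly careful: the cut restricted to $V_0(G_i)$ could in principle differ from $G_i$ if $g_I$ had other edges there, but $g_I$ restricted to $V_0(G_i)$ is exactly $G_i$ when $i\in I$, since the $5$-cycles are vertex-disjoint.) Thus $g_I\notin B^1$ for all nonempty $I$, which is precisely the statement that $G_1,\dots,G_h$ are $\mathbb{F}_2$-linearly independent modulo $B^1(S,\mathbb{F}_2)$.

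Finally I would conclude: $\dim_{\mathbb{F}_2} H^1(S,\mathbb{F}_2)\ge h$. Since $S$ is a hypertree, $H_1(S,\mathbb{Z})$ is finite, so $H^1(S,\mathbb{Z})=0$ and by the universal coefficient theorem $H^1(S,\mathbb{F}_2)\cong \Hom(H_1(S,\mathbb{Z}),\mathbb{F}_2)\oplus \mathrm{Ext}(H_0(S,\mathbb{Z}),\mathbb{F}_2)$; as $S$ is connected $H_0$ is free and the Ext term vanishes, giving $H^1(S,\mathbb{F}_2)\cong\Hom(H_1(S,\mathbb{Z})\otimes\mathbb{F}_2,\mathbb{F}_2)$, so $\dim H_1(S,\mathbb{F}_2)=\dim H^1(S,\mathbb{F}_2)\ge h$. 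Alternatively, and perhaps more cleanly, one can avoid cohomology entirely: work with $H_1(S,\mathbb{F}_2)=Z_1(S,\mathbb{F}_2)/B_1(S,\mathbb{F}_2)$ and exhibit $h$ independent $1$-cycles directly — but the cochain side is the natural one here since everything is phrased via $Z^1$. The main obstacle is the middle step, verifying that no nonzero $\mathbb{F}_2$-combination of the $G_i$ is a coboundary; the key point making it work is the deliberate choice of $5$-cycles (odd cycles), which are never cuts, so the argument would fail for even cycles. Everything else is bookkeeping with Lemma~\ref{lemmaZ} and the universal coefficient theorem.
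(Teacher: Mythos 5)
Your proposal is correct and follows essentially the same route as the paper: deduce $G_i\in Z^1(S,\mathbb{F}_2)$ from Lemma~\ref{lemmaZ} via the containment $F_0(G)\cup F_2(G)\subset F_0(G_i)\cup F_2(G_i)$, show no nontrivial $\mathbb{F}_2$-combination of the $G_i$ is a coboundary, and pass from $H^1$ to $H_1$. In fact you supply the one step the paper only asserts --- that a coboundary is a cut and hence its restriction to $V_0(G_i)$ is bipartite, which an odd cycle cannot be --- so nothing is missing.
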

\begin{proof}
As in Section~\ref{secfirst}, let $G_1,G_2,\dots,G_h$ be the $h$ $5$-cycles of $G$ (considered as graphs on the vertex set $[n]$). Note that $F_0(G)\cup F_2(G)\subset F_0(G_i)\cup F_0(G_i)$. Combining this with Lemma~\ref{lemmaZ}, we see that if $G\in Z^1(S,\mathbb{F}_2)$, then $G_i\in Z^1(S,\mathbb{F}_2)$ for all $i$. None of the non-trivial linear combinations of $G_1,G_2,\dots,G_h$ over $\mathbb{F}_2$ is a coboundary, which shows that $\dim H^1(S,\mathbb{F}_2)\ge h$. Since $ H^1(S,\mathbb{F}_2)$ and $H_1(S,\mathbb{F}_2)$ are isomorphic the statement follows.
\end{proof}

The first statement of Theorem~\ref{thm1} follows from \eqref{Paley} and Lemma~\ref{largedim}. 

We prove the second statement by contradiction. If \eqref{rankconj} were for all $k$, then
\begin{align*}\lim_{n\to\infty} \mathbb{P}(\dim H_1(T_n)\ge h)&=\sum_{k=h}2^{-k^2} \prod_{j=1}^{k} \left(1-2^{-j}\right)^{-2} \prod_{j=1}^{\infty}\left(1-2^{-j}\right)\\&\le 2^{-h^2}\left(\prod_{j=1}^{\infty} \left(1-2^{-j}\right)^{-2}\right)\sum_{i=0}^{\infty} 2^{-i}\\&=C2^{-h^2}\end{align*}
for $C=2\prod_{j=1}^{\infty} \left(1-2^{-j}\right)^{-2}$. For large enough $h$, we have $C2^{-h^2}<\frac{e^{-200h}}{(100h)^{5h}}$, which contradicts the first statement of Theorem~\ref{thm1}.

\section{The proof of Theorem~\ref{thmcosys}}

Let $G\in \mathcal{G}_{n,1}$. Let $\tau\in E(G)$. By Lemma~\ref{lemma8}, on the event $G\in Z^1(T_n,\mathbb{F}_2)$, there are exactly two triangular faces of $T_n$ which contain $\tau$. Therefore, on the event $G\in Z^1(T_n,\mathbb{F}_2)$, we have \[\|G\|=5\frac{2}{3{{n-1}\choose 2}}=\frac{20}{3(n-1)(n-2)}.\]
Thus,
\[ \mathbb{P}\left(\syst^1(T_n)\le \frac{20}{3(n-1)(n-2)}\right)\ge \mathbb{P}(X_n>0).\]
Combining this with \eqref{Paley} and the fact that $\frac{20}{3(n-1)(n-2)}<\frac{7}{n^2}$ for all large enough $n$, Theorem~\ref{thmcosys} follows. 
\bibliography{references}
\bibliographystyle{plain}

\bigskip

\noindent Andr\'as M\'esz\'aros, \\
Department of Computer and Mathematical Sciences, \\University of Toronto Scarborough, Canada,\\ {\tt a.meszaros@utoronto.ca}

\end{document}